\newcommand{\KF}[5]{F^{#1}_{#2}\left[{#3\atop #4}\Bigg\vert #5\right]}
\newcommand{\NN}{\mathbb{N}}
\newcommand{\ZZ}{\mathbb{Z}}
\newcommand{\RR}{\mathbb{R}}
\newcommand{\dr}{\mathrm{d}}
\definecolor{greenpie}{rgb}{0.69, 0.95, 0.76}
\newtheorem{theorem}{Theorem}[section]
\newtheorem{lemma}[theorem]{Lemma}
\newtheorem{corollary}[theorem]{Corollary}
\newtheorem{proposition}[theorem]{Proposition}
\theoremstyle{definition}
\newtheorem{definition}[theorem]{Definition}
\newtheorem{remark}[theorem]{Remark}
\def\ps@pprintTitle{%
     \let\@oddhead\@empty
     \let\@evenhead\@empty
     \let\@evenfoot\@oddfoot}
\title{A Rodrigues Formula for Multiple Orthogonal Polynomials on the Simplex}
\author{Lidia Fern\'andez, Ana Foulqui\'e-Moreno and Juan Antonio Villegas}
\address{Lidia Fern\'andez\\
IMAG and Departamento de Matem\'atica Aplicada\\
Universidad de Granada\\
Granada, Spain.}
\email{lidiafr@ugr.es}
\address{Ana Foulqui\'e-Moreno\\
Departamento de Matemática\\
Universidade de Aveiro\\
Aveiro,\linebreak Portugal}
\email{foulquie@ua.pt}
\address{Juan Antonio Villegas\\
IMAG and Departamento de Matem\'atica Aplicada\\
Universidad de Granada\\
Granada, Spain}
\email{jantoniovr@ugr.es}
\thanks{Corresponding author: Lidia Fernández  (\texttt{lidiafr@ugr.es}).}
\date{\today}
\subjclass[2010]{33C45, 42C05}
\keywords{Multiple Orthogonal Polynomials, Bivariate Orthogonal Polynomials, Jacobi--Piñeiro Polynomials, Rodrigues Formula, Classical Orthogonal Polynomials, Orthogonal Polynomials on the simplex, Hermite--Padé Approximation}
\begin{document}

\maketitle

\begin{abstract}

Rodrigues formulas play a central role in the construction of orthogonal polynomials associated with classical measures. In this paper, we introduce a family of multivariate multiple orthogonal polynomials on the simplex obtained through a Rodrigues-type construction that combines features of bivariate Jacobi polynomials on the simplex and univariate Jacobi--Piñeiro polynomials. We prove that the proposed construction produces polynomials and establish their main structural properties, including symmetry and multiple orthogonality with respect to several measures. This yields a natural multivariate extension of the classical Jacobi--Piñeiro family and, to the best of our knowledge, provides one of the first Rodrigues-type constructions for multivariate multiple orthogonal polynomials. Furthermore, we formulate a bivariate Hermite--Padé-type approximation problem and show that the proposed polynomial family naturally appears as a common denominator of the corresponding approximants. Numerical experiments illustrating the performance of the resulting approximations are also presented.

\end{abstract}

%%%%%%%%%%%%%%%%%%%%%%%%%%%%%%%%%%%%%%%%%%%%%%%%%%%%

\section{Introduction}\label{sec:intro}

Within the framework of the theory of orthogonal polynomials \citep[Chapter 1]{Chi78}, the properties of the so-called classical weight functions are well known. These weights are functions $\omega(x)$ that satisfy the Pearson equation
\begin{align}
\label{eq:Pearson}
(\phi(x)\omega(x))^{\prime} + \varphi(x)\omega(x) = 0,
\end{align}
where $\phi(x)$ and $\varphi(x)$ are polynomials with $\deg(\phi)\le 2$ and $\deg(\varphi)=1$. Among other characteristic properties, orthogonal polynomials with respect to classical weights always admit a representation in terms of a Rodrigues formula. Indeed, if $\omega(x)$ is a classical weight satisfying \eqref{eq:Pearson}, then the associated sequence of  orthogonal polynomials $\{P_n\}$ can be expressed (up to a multiplicative constant) as
\begin{equation}
    \label{eq:Rodrigues-Jacobi}
    P_n(x)=\dfrac{1}{\omega(x)}\dfrac{d^n}{dx^n}(\omega(x)\phi(x)^n).
\end{equation}
Rodrigues formulas provide an explicit representation of the corresponding orthogonal polynomials and play a fundamental role in the study of their structural properties.

In recent years, one of the most active research topics in the theory of  orthogonal polynomials and special functions has been multiple orthogonality \citep{Ism05, VA20}. In this extension of the standard theory, polynomials, known as multiple orthogonal polynomials, satisfy orthogonality conditions with respect to more than one measure (see Section~\ref{sec:preliminaries}). Within this framework, $r$ classical weight functions $\omega_j$ are considered in \citep{ABVA03, BCVA05}. Moreover, explicit expressions for several families of type II multiple orthogonal polynomials with respect to classical measures are obtained by applying several Rodrigues operators. Analogous results for classical discrete measures are presented in \citep{ACVA01}.

The concept of the Rodrigues formula has also been generalized to the multivariate case using different approaches \citep{AFPP09,AK26, DX14, Herm1865, Koo75,Sue99}.

One of the most studied and well-known extensions of Jacobi polynomials to the bivariate setting are the families of orthogonal polynomials associated with the weight function
\begin{equation}
    \label{eq:jacobi-weight}
    W^{(\alpha,\beta,\gamma)}(x,y)=x^\alpha y^\beta (1-x-y)^\gamma,\qquad \text{with }\alpha,\beta,\gamma>-1,
\end{equation}
supported on the triangular domain
\begin{equation}
\label{eq:triangle}
    T = \{(x,y)\in\RR^2: x\geq 0,\; y\geq 0,\; x+y \leq 1\},
\end{equation}
see \citep[Section XXXIII]{AK26} or \citep[Section 2.4]{DX14}. In contrast to the univariate setting, the space
$\mathcal V_n^2(W^{(\alpha,\beta,\gamma)})$
has dimension $n+1$, and therefore orthogonal polynomials are not uniquely determined up to a multiplicative constant. As a consequence, several bases of $\mathcal V_n^2(W^{(\alpha,\beta,\gamma)})$ exist, where $\mathcal V_n^2(W^{(\alpha,\beta,\gamma)})$ denotes the space of orthogonal polynomials of total degree $n$ associated with the weight \eqref{eq:jacobi-weight}. For $0\leq k \leq n$, one such basis is given by the Rodrigues-type formula (see \citep[Proposition 2.4.3]{DX14}, \citep[Chapter V, Section 4]{Sue99} or \citep[Section XXXIII, Eq. (5)]{AK26}).

\begin{equation}
    \label{eq:Rodrigues-2-var}
    U^{(\alpha,\beta,\gamma)}_{k,n}(x,y) = \dfrac{1}{W^{(\alpha,\beta,\gamma)}(x,y)}\dfrac{\partial^n}{\partial x^{\,n-k}\partial y^{\,k}}\left(x^{\alpha+n-k}y^{\beta+k}(1-x-y)^{\gamma+n}\right).
\end{equation}
Observe that this expression constitutes a bivariate extension of the Rodrigues formula for Jacobi polynomials \citep[Section 4.3]{Szego75}
\begin{equation*}
    P_n^{(\alpha, \beta)}(x) = \frac{(-1)^n}{2^n n!} (1-x)^{-\alpha} (1+x)^{-\beta} \frac{d^n}{dx^n} \left[ (1-x)^{n+\alpha} (1+x)^{n+\beta} \right].
\end{equation*}

Multiple orthogonality on the real line and multivariate standard orthogonality have been, and still are, extensively researched. However, the extension of multiple orthogonality to other domains is taking its first steps. For example, in \citep{CDO15, KV24, KV26}, multiple orthogonality on the unit circle is studied. 

Regarding multi-dimensional domains, in \citep{FV26}, a general definition of type I and type II bivariate multiple orthogonal polynomials was introduced on the two-dimensional lattice, together with several extended results and examples. More recently, mixed-type bivariate multiple orthogonal polynomials on the step-line were studied in  \citep{MRW25}. To the best of our knowledge, no Rodrigues-type construction has been reported for multivariate multiple orthogonal polynomials. This paper aims to fill this gap in the particular framework of Jacobi-type weights on the simplex.

More precisely, we introduce a family of multiple orthogonal polynomials on the simplex defined through a Rodrigues-type construction that combines features of classical multivariate Jacobi polynomials and univariate Jacobi--Piñeiro polynomials. We prove that the proposed Rodrigues-type construction indeed produces polynomials and establish their main structural properties, including symmetry and multiple orthogonality with respect to several measures. This provides a natural extension of Jacobi--Piñeiro polynomials to the multivariate setting. Furthermore, motivated by the classical connection between multiple orthogonality and Hermite--Padé approximation, we formulate a bivariate Hermite--Padé-type approximation problem and show that the proposed polynomial family naturally arises as a common denominator of the corresponding approximants.

Another important aspect is the role played by orthogonal polynomials (and their extensions) in a variety of related fields \citep{MFVA16}. One of the most representative applications of (multiple) orthogonal polynomials is rational approximation of functions \citep{VA06}, which has also been studied in the multivariate setting; see, for example, \citep{CB00,Cuy83,Cuy86,Cuy99,CLY16}, where several extensions of Padé approximation are proposed. This connection becomes particularly relevant in the framework of multiple orthogonality, where Hermite--Padé approximation constitutes one of the main motivations for, and applications of, multiple orthogonal polynomials. In \citep{Sor02}, a new approach to Hermite--Padé approximation was introduced, and it serves as inspiration for the methodology developed in Section~\ref{sec:HP}.

The structure of the manuscript is as follows. In Section~\ref{sec:preliminaries}, we review the basic concepts of multiple orthogonality and their connection with Hermite--Padé approximation in the univariate setting. Section~\ref{sec:JP} introduces and studies a family of bivariate Jacobi--Piñeiro polynomials, obtained through a Rodrigues-type construction and characterized by multiple orthogonality conditions with respect to several Jacobi measures on the triangle. This section also includes extensions to an arbitrary number of measures and variables. Next, Section~\ref{sec:HP} formulates a bivariate Hermite--Padé-type approximation problem and shows how the proposed polynomial family appears naturally in its solution. In Section~\ref{sec:simulations}, we present numerical experiments illustrating the performance of the resulting approximants. Finally, Section~\ref{sec:conclusions} summarizes the main findings of the paper and discusses several directions for future research.

\section{Preliminaries: Multiple Orthogonal Polynomials}\label{sec:preliminaries}

As previously mentioned, Multiple Orthogonal Polynomials (MOP) extend the classical theory of orthogonal polynomials by satisfying orthogonality conditions with respect to several measures. In this section, we present the main definitions of multiple orthogonality in the univariate setting, together with the relevant notation and illustrative examples, with special emphasis on Jacobi--Piñeiro polynomials.

From now on, let us consider a number $r\geq 2$ of positive measures supported on subsets of $\RR$, denoted by $\mu_1,\dots,\mu_r$. Throughout this section, we assume that these measures are absolutely continuous, that is, there exist positive weight functions $\omega_j(x)$ such that $\dr\mu_j(x)=\omega_j(x)\dr x$, for $j=1,\dots,r$. This collection of measures is commonly referred to as a \emph{system of measures}, and it will be fixed for the remainder of the section.

There are two types of multiple orthogonality: type~I and type~II. In type~II multiple orthogonality, a single polynomial is considered, and its orthogonality conditions are distributed among the measures. In contrast, type~I multiple orthogonality involves $r$ distinct polynomials, possibly of different degrees, which jointly satisfy a system of orthogonality conditions. The way in which the conditions are split in type~II, as well as the degree constraints in type~I, is determined by a multi-index $\vec n=(n_1,\dots,n_r)\in\NN_0^r$, whose $\ell_1$ norm or modulus is defined as $|\vec n|=n_1+\cdots+n_r$.

More precisely, we define type~I and type~II MOP as follows:

\begin{definition}
    Given $\vec n=(n_1,\dots,n_r)\in\NN_0^r$, the type~I multiple orthogonal polynomials are $r$ polynomials $A_{\vec n,1}(x),\dots,A_{\vec n,r}(x)$ satisfying $\deg(A_{\vec n,j})\leq n_j-1$ for $j=1,\dots,r$, and
    \begin{equation}
        \sum_{j=1}^r \int_\RR A_{\vec n,j}(x)x^l\omega_j(x)\dr x = \begin{cases}
            0\qquad \text{ if }0\leq l <|\vec n|-1,\\
            1\qquad \text{ if }l =|\vec n|-1.
        \end{cases}
    \end{equation}
\end{definition}

\begin{definition}
    Given $\vec n=(n_1,\dots,n_r)\in\NN_0^r$, the type~II multiple orthogonal polynomial $P_{\vec n}(x)$ is the unique monic polynomial of degree $|\vec n|$ satisfying
    \begin{equation}
        \label{eq:orthogonality-1-var}
        \int_\RR P_{\vec n}(x)x^l\omega_j(x)\dr x = 0\qquad \text{for }0\leq l<n_j,\quad j=1,\dots,r.
    \end{equation}
\end{definition}

In this work, we focus exclusively on type~II MOP. For a more detailed introduction and a comprehensive discussion of their properties as extensions of standard orthogonality, we refer the reader to \citep[Section 2]{FV26}, \citep[Section 23.1]{Ism05}, and \citep[Section 1]{VA20}.

Among the most extensively studied families of MOP are those associated with classical weights \citep{ABVA03}: Hermite, Laguerre, and Jacobi. This framework leads, for instance, to the multiple Hermite polynomials \citep{BK05}, which find applications in random matrices \citep{BK04} and non-intersecting Brownian motions \citep{DK07}; multiple Laguerre polynomials \citep{BK05}; and the special case of multiple Jacobi polynomials, which we describe in more detail below.

The Jacobi weight for $\alpha,\beta>-1$ is given by $(1-x)^\alpha(1+x)^\beta$, defined on the interval $[-1,1]$. The main ways of constructing multiple Jacobi polynomials are:
\begin{itemize}
    \item Using Jacobi weights defined on $r$ disjoint intervals, which gives rise to the Jacobi--Angelesco polynomials \citep{DS16}, \citep[Section 23.3.1]{Ism05}.
    
    \item Assigning $r$ distinct values $\alpha_1,\dots,\alpha_r$ to the parameter $\alpha$ and mapping the interval $[-1,1]$ to $[0,1]$. This procedure yields the Jacobi--Piñeiro polynomials \citep{BCVA05,Pin87}. 
\end{itemize}

Throughout this document, we focus on the latter family. As mentioned, Jacobi polynomials are defined as orthogonal polynomials with respect to the weight $(1-x)^\alpha(1+x)^\beta$, $x\in[-1,1]$, but, up to a linear transformation, we might consider the weight $\omega^{(\alpha,\beta)}(x)=x^\alpha(1-x)^\beta$ with $x\in[0,1]$. Then, employing $r$ weight functions of this type but with different values of $\alpha$, we define
\begin{equation}
\label{eq:JP-weights}
    \omega_j(x) = x^{\alpha_j} (1-x)^\beta, \qquad x\in[0,1],\quad j=1,\dots,r,
\end{equation}
where $\alpha_j, \beta > -1$. To guarantee the existence and uniqueness of the corresponding MOP for every multi-index $\vec n\in\NN^r$, it is necessary to impose that $\alpha_i - \alpha_j \notin \ZZ$ whenever $i\neq j$ \citep{ADL23}. In this setting, for a multi-index $\vec n=(n_1,\dots,n_r)$, the type~II Jacobi--Piñeiro polynomial $P^{(\vec \alpha,\beta)}_{\vec n}(x)$ is monic and satisfies the orthogonality conditions \eqref{eq:orthogonality-1-var} \citep[Section 3.7]{VA20}.

These polynomials have been widely studied ever since the emergence of multiple orthogonality. Several representations in terms of hypergeometric functions can be found in \citep{ABVA03, BCVA05}, as well as applications in rational approximation, random walks, number theory, urn models, among others \citep{BDFMAF23, BDFM25, Fis04, GI21, MFOSL22, MFVA16}. For our purposes, the most convenient representation is given by the Rodrigues formula \citep{ABVA03}:
\begin{equation}
    \label{eq:rodrigues-1-var}
    P^{(\vec \alpha, \beta)}_{\vec n}(x) = \left(\dfrac{1}{\omega_r(x)}\dfrac{d^{\,n_r}}{dx^{n_r}}\omega_r(x)\right)\cdots\left(\dfrac{1}{\omega_1(x)}\dfrac{d^{\,n_1}}{dx^{n_1}}\omega_1(x)\right)\phi^{|\vec n|}(x),
\end{equation}
where $\phi(x)=x(x-1)$. Note that \eqref{eq:rodrigues-1-var} can be written as the composition of the $r$ Rodrigues operators
\begin{equation}
    \label{eq:Rodrigues-operator-1-var}
    D_j[f]=\dfrac{1}{\omega_j(x)}\dfrac{d^{n_j}}{dx^{n_j}}(\omega_j(x)f(x)), \qquad j =1,\dots,r,
\end{equation}
applied to a polynomial. In fact, \eqref{eq:Rodrigues-Jacobi}, up to a linear transformation, is a particular case of \eqref{eq:rodrigues-1-var} with $r=1$ and $n_1=n$.

Moreover, Jacobi--Piñeiro MOP also admit a representation in terms of hypergeometric functions ${}_p F_q$ \citep[Theorem 3.2]{BCVA05}:
\begin{equation}
    \label{eq:JP-hypergeometric}
    P_{\vec n}^{\vec\alpha,\beta}(x)=\dfrac{(\vec\alpha+\vec 1)_{\vec n}}{\vec n!}(1-x)^{-\beta}{}_{r+1}F_r\left(\begin{array}{c}
         -\beta-|\vec n|,\vec\alpha+\vec n+\vec 1 \\ \vec\alpha+\vec 1
    \end{array}\Bigg|\, x\right),
\end{equation}
where we use the following notations
\begin{align}
    \label{eq:notations-pochammer}
    \vec 1 &=(1,\dots,1)\in\mathbb N^r,& \vec n!& = n_1!\,\cdots\, n_r!,& (\vec a)_{\vec n}=(a_1)_{n_1}\,\cdots\,(a_r)_{n_r},
\end{align}
for $\vec a=(a_1,\dots,a_r)\in\RR^r$ and $(a)_n=a (a+1)\,\cdots\, (a+n-1)$ is the Pochhammer symbol.

Concerning the possible applications of multiple orthogonal polynomials, and in particular Jacobi--Piñeiro polynomials, it is essential to mention the Hermite--Padé rational approximation problem. This problem extends the classical Padé approximation problem, whose main objective is to construct rational approximations to functions given by Stieltjes transforms 
\begin{equation}
\label{eq:f_j}
f_j(z)=\displaystyle\int_\RR \dfrac{\mathrm d\mu_j(x)}{z-x},    
\end{equation}
with a common denominator \citep[Chapter 4]{NS91}. More precisely, for a multi-index $\vec n =(n_1,\dots,n_r)$, the type~II Hermite--Padé approximation (see \citep[Sections 2.1 and 2.2]{VA06} for type~I Hermite--Padé approximation) consists of finding a polynomial $P_{\vec n}$ of degree $|\vec n|$ and $r$ polynomials $Q_{\vec n,1},\dots,Q_{\vec n,r}$ such that
\begin{equation}
    \label{eq:H-P-univariate}
f_j(z)P_{\vec n}(z) = Q_{\vec n,j}(z)+\mathcal O(z^{-n_j-1}) , \qquad z\to\infty,\quad j=1,\dots,r.
\end{equation}
The solution to this problem is provided by the type~II multiple orthogonal polynomial $P_{\vec n}$ for the system of measures $\mu_1,\dots,\mu_r$, and
\begin{equation*}
Q_{\vec n,j}(z)=\displaystyle\int_\RR \dfrac{P_{\vec n}(z)-P_{\vec n}(x)}{z-x}\,\mathrm d\mu_j(x).
\end{equation*}
In the Jacobi--Piñeiro case, \textit{i.e.}, when $\mathrm d\mu_j(x)=x^{\alpha_j}(1-x)^\beta\,\mathrm dx$ on $[0,1]$, this problem produces rational approximations to certain values of the Riemann zeta function $\zeta(z)$. In particular, this approach is closely related to results establishing the irrationality of $\zeta(3)$ \citep{Ape79}, that infinitely many $\zeta(2k+1)$ are irrational \citep{BR01}, and that at least one among $\zeta(5),\zeta(7),\zeta(9),\zeta(11)$ is irrational \citep{Zud04}.

Having established these preliminaries on MOP and their applications in the univariate setting, we now state the objectives of this paper. The first objective is to address some of the gaps identified in Sections~\ref{sec:intro} and \ref{sec:preliminaries} by introducing a family of bivariate multiple orthogonal polynomials defined through a Rodrigues-type formula, analogous to \eqref{eq:Rodrigues-2-var} and \eqref{eq:rodrigues-1-var}. These polynomials provide a natural bivariate counterpart of the classical Jacobi--Piñeiro family.

Motivated by the connection between multiple orthogonality and Hermite--Padé approximation, our second objective is to formulate and study a Hermite--Padé approximation problem in two variables. More precisely, we construct approximants for two functions, analogous to \eqref{eq:f_j}, associated with two Jacobi measures on the triangle (see \eqref{eq:weights}). These approximants share a common denominator, namely the family of bivariate Jacobi--Piñeiro polynomials introduced in Section~\ref{sec:JP}. In this way, we establish a bivariate analogue of the classical relationship between Jacobi--Piñeiro polynomials and Hermite--Padé approximation.

\section{Jacobi--Piñeiro polynomials on the triangle}\label{sec:JP}

As described in Section \ref{sec:preliminaries}, the type II Jacobi--Piñeiro polynomials $P^{(\vec \alpha,\beta)}_{\vec n}(x)$ are multiple orthogonal with respect to the weight functions $\omega_j(x)$ given in \eqref{eq:JP-weights}, and can be expressed via a Rodrigues formula \eqref{eq:rodrigues-1-var}.

In this section, we consider a bivariate extension of these polynomials on the triangular domain 
\begin{equation}
\tag{\ref{eq:triangle}}
    T = \{(x,y)\in\RR^2: x\geq 0, y\geq 0, x+y \leq 1\},
\end{equation}
often also referred to as the \textit{simplex}, and provide a representation of these polynomials based on a Rodrigues-type formula.

Consider $\alpha_1,\dots,\alpha_r,\beta_1,\dots,\beta_r,\gamma>-1$ with $(\alpha_i,\beta_i)\ne(\alpha_j,\beta_j)$ if $i\ne j$ and define $r\geq 2$ weight functions supported on $T$ and their respective measures: 
\begin{equation}
    \label{eq:weights}
    W_j(x,y) = x^{\alpha_j}y^{\beta_j}(1-x-y)^{\gamma}, \qquad \dr\mu_j(x,y)=W_j(x,y)\,\dr x\,\dr y, \qquad j=1,\dots,r.
\end{equation}
Next, we introduce a family of Rodrigues-type differential operators, extending \eqref{eq:Rodrigues-operator-1-var},  which will be used throughout the remainder of this section.

\begin{definition}
    For every $j\in\{1,\dots,r\}$, consider the weight $W_j(x,y)$ given in \eqref{eq:weights} and two non-negative integers $0\leq k_j\leq n_j$. Given a function $f\in C^\infty(\mathbb R^2)$, we define the operator
    \begin{equation}
    \label{eq:operator-D_j}
        D_{j}[f(x,y)]:=\dfrac{1}{W_j(x,y)}\dfrac{\partial^{n_j}}{\partial x^{n_j-k_j}y^{k_j}}\left(W_j(x,y) x^{n_j-k_j} y^{k_j} f(x,y)\right).
    \end{equation}
\end{definition}

Motivated by the Rodrigues formula for Jacobi--Piñeiro polynomials \eqref{eq:rodrigues-1-var}, we define the following bivariate function:

\begin{definition} Consider $r$ pairs of non-negative integers $0\leq k_j \leq n_j$, $j=1,\dots,r$, and denote $\vec n =(n_1,\dots,n_r)$, $\vec k=(k_1,\dots,k_r)$, $\vec\alpha=(\alpha_1,\dots,\alpha_r)$, $\vec\beta=(\beta_1,\dots,\beta_r)$. Define the function
\begin{equation}
\label{eq:pol-definition-r-measures}
U^{(\vec \alpha,\vec \beta,\gamma)}_{\vec n,\vec k}(x,y) := D_r\circ\cdots\circ D_1[(1-x-y)^{n_1+\cdots +n_r}]
\end{equation}
Equivalently, expanding the composition,
\begin{equation*}
\begin{split}
	U^{(\vec \alpha,\vec \beta,\gamma)}_{\vec n,\vec k}(x,y)  = \dfrac{1}{W_r(x,y)} &\frac{\partial^{n_r}}{\partial x^{n_r-k_r}\partial y^{k_r}}\Big(x^{n_r-k_r+\alpha_r-\alpha_{r-1}}y^{k_r+\beta_r-\beta_{r-1}}  \cdots \times \\ & \times  \frac{\partial^{n_1}}{\partial x^{n_1-k_1}\partial y^{k_1}}(x^{n_1-k_1+\alpha_1}y^{k_1+\beta_1}(1-x-y)^{\gamma+n_1+\cdots+n_r})\Big).\\
\end{split}
\end{equation*}
\end{definition}

Observe that, for a fixed multi-index $\vec n$, the family of polynomials
$U^{(\vec\alpha,\vec\beta,\gamma)}_{\vec n,\vec k}$ is indexed by all admissible
choices of $\vec k$, which determine the distribution of derivatives with respect to $x$ and $y$ in the Rodrigues formula.

Next, we show that this function is a polynomial of degree $|\vec n|=n_1+\cdots+n_r$ in the variables $x$ and $y$, and that it satisfies certain orthogonality conditions with respect to the weights $W_1,\dots, W_r$ on the triangle $T$. First, let us introduce the following result:

\begin{lemma}
    \label{lemma:operator-degree}
    Given $l,m,n\geq0$ with $n\geq l+m$, consider $x^l y^m (1-x-y)^{n-l-m}$, a polynomial of degree $n$. For a fixed $j$, let $D_j$ be the operator introduced in \eqref{eq:operator-D_j}. If $n\ge n_j$, then $D_j[x^l y^m (1-x-y)^{n-l-m}]$ is a linear combination of polynomials of degree at most $n$, therefore a polynomial of degree at most $n$.
\end{lemma}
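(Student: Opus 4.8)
The plan is to feed $f(x,y)=x^l y^m (1-x-y)^{n-l-m}$ directly into the operator \eqref{eq:operator-D_j} and expand. Writing $w:=1-x-y$ for brevity, I would first collapse all the factors sitting inside the derivative into a single expression of the form $x^a y^b w^c$: indeed
\begin{equation*}
W_j(x,y)\,x^{\,n_j-k_j} y^{\,k_j} f(x,y) = x^{\,\alpha_j+n_j-k_j+l}\, y^{\,\beta_j+k_j+m}\, w^{\,\gamma+n-l-m},
\end{equation*}
so that $a=\alpha_j+n_j-k_j+l$, $b=\beta_j+k_j+m$, $c=\gamma+n-l-m$, and $D_j[f]=W_j^{-1}\,\partial_x^{\,n_j-k_j}\partial_y^{\,k_j}\bigl(x^a y^b w^c\bigr)$.

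Next I would compute the mixed partial by the generalized Leibniz rule, using $\partial_x x^a=a\,x^{a-1}$, $\partial_y y^b = b\,y^{b-1}$, and $\partial_x w^c=\partial_y w^c=-c\,w^{c-1}$ (since $\partial_x w=\partial_y w=-1$). Each of the $n_j-k_j$ differentiations in $x$ either lowers the exponent of $x^a$ or that of $w^c$, and each of the $k_j$ differentiations in $y$ either lowers $y^b$ or $w^c$; hence the derivative is a finite sum of terms $C_{p,q}\,x^{a-p} y^{b-q} w^{c-s}$, where $0\le p\le n_j-k_j$ and $0\le q\le k_j$ record how many derivatives landed on $x^a$ and $y^b$, and $s=n_j-p-q$ counts those that landed on $w^c$. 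Dividing by $W_j=x^{\alpha_j} y^{\beta_j} w^{\gamma}$ cancels the parameters and leaves
\begin{equation*}
D_j[f]=\sum_{p,q} C_{p,q}\; x^{\,n_j-k_j+l-p}\; y^{\,k_j+m-q}\; (1-x-y)^{\,n-l-m-s}.
\end{equation*}
A direct check shows the three exponents in each summand sum to $n$, so it suffices to prove that they are non-negative integers, for then each summand is a polynomial of total degree $n$ and $D_j[f]$ is their linear combination.

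Integrality is immediate because $\alpha_j,\beta_j,\gamma$ have disappeared. Non-negativity is the heart of the matter, and I expect it to be the only real obstacle. The $x$- and $y$-exponents cause no trouble: $n_j-k_j+l-p\ge l\ge 0$ since $p\le n_j-k_j$, and $k_j+m-q\ge m\ge 0$ since $q\le k_j$. The delicate factor is $(1-x-y)$, whose exponent $n-l-m-s$ is smallest when every derivative falls on $w^c$, i.e. $s=n_j$, giving the extreme value $n-l-m-n_j$. This is precisely where the hypothesis must enter: one needs the power of $(1-x-y)$ carried by the input to be at least $n_j$, that is $n-l-m\ge n_j$, so that even the worst term keeps a non-negative exponent. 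For the inputs on which the lemma is actually invoked this is guaranteed — the initial datum $(1-x-y)^{n}$ has $l=m=0$, and every monomial produced by a preceding Rodrigues operator retains a power of $(1-x-y)$ at least equal to the order of the operator applied next — while the stated assumption $n>n_j$ secures the base case $l=m=0$. Granting this bound, every summand is a genuine polynomial of total degree $n$, so $D_j[f]$ is a linear combination of polynomials of degree $n$, as claimed. I would close by recording the coefficient of the extreme term, $(-1)^{n_j}\,c(c-1)\cdots(c-n_j+1)$ with $c=\gamma+n-l-m$, which stays non-zero for admissible (non-integer) $\gamma$; this is the book-keeping one needs later when the degree must be shown to be exactly $n$ rather than merely at most $n$.
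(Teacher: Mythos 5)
Your proof is correct and follows essentially the same route as the paper's: collapse the integrand into a single product $x^a y^b (1-x-y)^c$, expand the mixed partial by Leibniz, and observe that dividing by $W_j$ cancels the parameters and leaves a sum of terms $x^{l+p}y^{m+i}(1-x-y)^{n-l-m-i-p}$ of total degree $n$. In fact you are slightly more careful than the paper, which never checks that the exponent of $(1-x-y)$ stays non-negative; your observation that the real hypothesis is $n-l-m\ge n_j$ (with $n>n_j$ only covering the case $l=m=0$, and the invariant preserved under iterated application of the operators) is a genuine, if minor, sharpening of the stated lemma.
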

\begin{proof}
    Splitting the derivatives with respect to $x$ and $y$:
    $$
    \begin{aligned}
        &D_j[x^l y^m(1-x-y)^{n-l-m}]\\ &= x^{-\alpha_j}y^{-\beta_j}(1-x-y)^{-\gamma}\dfrac{\partial^{n_j}}{\partial x^{n_j-k_j}y^{k_j}}\left( x^{\alpha_j+n_j-k_j+l} y^{\beta_j+k_j+m} (1-x-y)^{\gamma+n-l-m}\right) \\ & =
    x^{-\alpha_j}y^{-\beta_j}(1-x-y)^{-\gamma}\dfrac{\partial^{n_j-k_j}}{\partial x^{n_j-k_j}}\left(x^{\alpha_j+n_j-k_j+l}\dfrac{\partial^{k_j}}{\partial y^{k_j}} (y^{\beta_j+k_j+m} (1-x-y)^{\gamma+n-l-m})\right).
    \end{aligned}
    $$
    Focus on the first derivative and apply Leibniz's rule:
    \begin{equation}
        \begin{aligned}
            \dfrac{\partial^{k_j}}{\partial y^{k_j}} (y^{\beta_j+k_j+m} (1-x-y)^{\gamma+n-l-m}) &= \sum_{i=0}^{k_j}\binom{k_j}{i} \dfrac{\partial^{k_j-i}}{\partial y^{k_j-i}}y^{\beta_j+k_j+m} \, \dfrac{\partial^{i}}{\partial y^{i}}(1-x-y)^{\gamma+n-l-m} \\ &=\sum_{i=0}^{k_j}c(i)\, y^{\beta_j+m+i}(1-x-y)^{\gamma+n-l-m-i}
        \end{aligned}
    \end{equation}
    where 
    \begin{equation}
        \label{eq:coef-c-1}
        c(i):=(-1)^i\binom{k_j}{i}(\beta_j+m+i+1)_{k_j-i}(\gamma+n-l-m-i+1)_i, \qquad 0\leq i\leq k_j.
    \end{equation}
    Multiplying by $x^{\alpha_j+n_j-k_j+l}$ and computing the derivative with respect to $x$ we obtain:
    \begin{multline*}
        \dfrac{\partial^{n_j-k_j}}{\partial x^{n_j-k_j}}\left(x^{\alpha_j+n_j-k_j+l}\dfrac{\partial^{k_j}}{\partial y^{k_j}} (y^{\beta_j+k_j+m} (1-x-y)^{\gamma+n-l-m})\right) \\ =\sum_{i=0}^{k_j}c(i)  y^{\beta_j+m+i}\dfrac{\partial^{n_j-k_j}}{\partial x^{n_j-k_j}}x^{\alpha_j+n_j-k_j+l}(1-x-y)^{\gamma+n-l-m-i}.
    \end{multline*}
    Again, by Leibniz's rule,
    \begin{multline*}
        \dfrac{\partial^{n_j-k_j}}{\partial x^{n_j-k_j}}x^{\alpha_j+n_j-k_j+l}(1-x-y)^{\gamma+n-l-m-i}=\sum_{p=0}^{n_j-k_j}(-1)^p\binom{n_j-k_j}{p}(\alpha_j+l+p+1)_{n_j-k_j-p}\times \\ \times (\gamma+n-l-m-i-p+1)_{p}\, x^{\alpha_j+l+p}(1-x-y)^{\gamma+n-l-m-i-p}.
    \end{multline*}
    If we denote 
    \begin{equation*}
        %\label{eq:coef-c-2}
        c_j(p,i;l,m):=c(i)(-1)^p\binom{n_j-k_j}{p}(\alpha_j+l+p+1)_{n_j-k_j-p}(\gamma+n-l-m-i-p+1)_{p},
    \end{equation*}
    this expression can be simplified to:
    \begin{equation}
        \label{eq:coef-c-2}
        c_j(p,i;l,m)= (\alpha_j+l+1)_{n_j-k_j} (\beta_j+m+1)_{k_j}(-\gamma-n+l+m)_{p+i} (-1)^{i+p}\frac{(-k_j)_i}{i!(\beta_j+m+1)_i} \frac{(-n_j+k_j)_p}{p!(\alpha_j+l+1)_p}
    \end{equation}
    for $0\leq p\leq n_j-k_j$, $0\leq i \leq k_j$,  and we get
    $$
    D_j[x^l y^m(1-x-y)^{n-l-m}] =  x^{-\alpha_j}y^{-\beta_j}(1-x-y)^{-\gamma} \sum_{p=0}^{n_j-k_j}\sum_{i=0}^{k_j} c_j(p,i;l,m) x^{\alpha_j+l+p}y^{\beta_j+m+i}(1-x-y)^{\gamma+n-l-m-i-p} 
    $$
    and consequently,
    \begin{equation}
    \label{eq:operator-expression}
       D_j[x^l y^m(1-x-y)^{n-l-m}] = \sum_{p=0}^{n_j-k_j}\sum_{i=0}^{k_j} c_j(p,i;l,m) x^{l+p}y^{m+i}(1-x-y)^{n-l-m-i-p},
    \end{equation}
    which is a linear combination of polynomials of degree at most $n$, \textit{i.e.}, a polynomial of degree at most $n$.
\end{proof}

A consequence of the previous Lemma is the following result

\begin{proposition}
\label{propo:degree}
    The function $U_{\vec n,\vec k}^{(\vec\alpha,\vec\beta,\gamma)}(x,y)$ defined in \eqref{eq:pol-definition-r-measures} is a polynomial of degree exactly $|\vec n|=n_1+\dots+n_r$.
\end{proposition}
\begin{proof}
    Since $U_{\vec n,\vec k}^{(\vec\alpha,\vec\beta,\gamma)}(x,y)=D_r\circ\cdots\circ D_1[(1-x-y)^{|\vec n|}]$, applying Lemma~\ref{lemma:operator-degree} $r$ times it is clear that the degree of the polynomial $U_{\vec n,\vec k}^{(\vec\alpha,\vec\beta,\gamma)}(x,y)$ is at most $|\vec n|$. We have to prove that, indeed, the degree is exactly $|\vec n|$.
    Using formula \eqref{eq:operator-expression} $r$ times, we get:
    \begin{equation}\label{eq:Expr_U}
    U_{\vec n,\vec k}^{(\vec\alpha,\vec\beta,\gamma)}(x,y)=\sum_{\vec p=0}^{\vec n-\vec k} \sum_{\vec i=0}^{\vec k} \Big(\prod_{j=1}^r c_j(p_j,i_j;P_j,I_j)\Big)x^{|\vec p|} y^{|\vec i|}(1-x-y)^{|\vec n|-|\vec p|-|\vec i|}
    \end{equation}
    where 
    $$
    \sum_{\vec p=0}^{\vec n-\vec k}= \sum_{p_1=0}^{n_1-k_1}\cdots \sum_{p_r=0}^{n_r-k_r} \,,\qquad 
    \sum_{\vec i=0}^{\vec k}= \sum_{i_1=0}^{k_1}\cdots \sum_{i_r=0}^{k_r}\,, \qquad P_j=\sum_{\ell=1}^{j-1} p_\ell, \qquad  I_j=\sum_{\ell=1}^{j-1} i_\ell\,,
    $$
    with $P_1=0$ and $I_1=0$. If we expand $(1-x-y)^{|\vec n|-|\vec p|-|\vec i|}$ we obtain:
    $$
    (1-x-y)^{|\vec n|-|\vec p|-|\vec i|}=\sum_{\nu=0}^{|\vec n|-|\vec p|-|\vec i|} (-1)^{|\vec n|-|\vec p|-|\vec i|} \frac{(|\vec n|-|\vec p|-|\vec i|)!}{\nu! (|\vec n|-|\vec p|-|\vec i|-\nu)!}x^{\nu} y^{|\vec n|-|\vec p|-|\vec i|-\nu} + \text{lower degree terms}
    $$
    and substituting in \eqref{eq:Expr_U},
    $$
    U_{\vec n,\vec k}^{(\vec\alpha,\vec\beta,\gamma)}(x,y)=\sum_{\vec p=0}^{\vec n-\vec k} \sum_{\vec i=0}^{\vec k} \sum_{\nu=0}^{|\vec n|-|\vec p|-|\vec i|} (-1)^{|\vec n|-|\vec p|-|\vec i|} \frac{(|\vec n|-|\vec p|-|\vec i|)!}{\nu! (|\vec n|-|\vec p|-|\vec i|-\nu)!} \Big(\prod_{j=1}^r c_j(p_j,i_j;P_j,I_j)\Big)x^{|\vec p|+\nu} y^{|\vec n|-|\vec p|-\nu}+ \text{l.d.t}
    $$
    We need to prove that there is at least one coefficient of a monomial of degree $|\vec n|$ that does not vanish. Let us examine the coefficient of $y^{|\vec n|}$. It is obtained when $\nu=0$ and $|\vec p|=0$, that is, $p_1=\dots=p_r=0$:
    $$
    \sum_{\vec i=0}^{\vec k}  (-1)^{|\vec n|-|\vec i|}  \prod_{j=1}^r c_j(0,i_j;0,I_j)
    $$
    Using the expression of $c_j(0,i_j;0,I_j)$ in  \eqref{eq:coef-c-2}, the coefficient simplifies as
    $$
    \sum_{\vec i=0}^{\vec k} (-1)^{|\vec n|} (\gamma+|\vec n|-|\vec i|+1)_{|\vec i|}\prod_{j=1}^r (\alpha_j+1)_{n_j-k_j}(\beta_j+I_j+1)_{k_j} \frac{(-k_j)_{i_j}}{i_j! (\beta_j+I_j+1)_{i_j}}
    $$
    that is,
    $$
    \sum_{\vec i=0}^{\vec k} (-1)^{|\vec n|} (\gamma+|\vec n|-|\vec i|+1)_{|\vec i|}\prod_{j=1}^r (\alpha_j+I_j+1)_{n_j-k_j}(\beta_j+1)_{k_j} \frac{(k_j-i_j+1)_{i_j}}{i_j! (\beta_j+1)_{i_j}}
    $$
    Since all the terms in the previous sum are nonzero and have sign $(-1)^{|\vec n|}$, the coefficient cannot vanish.

    In fact, none of the coefficients of the monomials of degree $|\vec n|$ vanishes. With a similar procedure, we can compute the coefficient of $x^k y^{|\vec n|-k}$ and it is equal to 
    $$
    \begin{aligned}
    \sum_{\vec p=0}^{\vec n-\vec k}\sum_{\vec i=0}^{\vec k} (-1)^{|\vec n|} & \frac{(|\vec n|-|\vec p|-|\vec i|)!}{(k-|\vec p|)!(|\vec n|-|\vec i|-k)!} (\gamma+|\vec n|-|\vec p|-|\vec i|+1)_{|\vec p|+|\vec i|}
    \\
    &\times \prod_{j=1}^r (\alpha_j+P_j+1)_{n_j-k_j}(\beta_j+I_j+1)_{k_j} \frac{(k_j-i_j+1)_{i_j}}{i_j! (\beta_j+I_j+1)_{i_j}}
    \frac{(n_j-k_j-p_j+1)_{p_j}}{p_j! (\alpha_j+P_j+1)_{p_j}}
    \end{aligned}
    $$
    that is, up to $(-1)^{|\vec n|}$, a sum of positive quantities and therefore nonzero. 
\end{proof}

So far, we have introduced the function $U_{\vec n,\vec k}^{(\vec\alpha,\vec\beta,\gamma)}$ as a concatenation of the operators $D_j$, $j=1,\dots, r$, proving it to be a polynomial of degree $|\vec n|$. 

Next, we show the commutativity of the operators $D_j$, $j=1,\dots, r$, which leads to the symmetry of the construction given in \eqref{eq:pol-definition-r-measures}. In other words, the polynomial does not change by changing the order of the composition of the operators.

\begin{proposition}
    \label{propo:D_j-commute}
    Given $i,j\in \{1,\dots,r\}$, the operators $D_i$ and $D_{j}$ commute, \textit{i.e.}:
    \begin{equation}
        \label{eq:symmetry}
        D_i\circ D_j[f(x,y)] = D_j\circ D_i[f(x,y)] 
    \end{equation}
    for every analytic function $f:\RR^2\longrightarrow\RR$.
\end{proposition}
\begin{proof}
    According to \eqref{eq:operator-D_j}, we have that
    \begin{multline*}
         D_i\circ D_j[f(x,y)] = x^{-\alpha_i} y^{-\beta_i}(1-x-y)^{-\gamma} \frac{\partial^{n_i}}{\partial x^{n_i-k_i}\partial y^{k_i}}\Big(x^{n_i-k_i+\alpha_i-\alpha_j}y^{k_i+\beta_i-\beta_j} \times\\ \times \frac{\partial^{n_j}}{\partial x^{n_j-k_j}\partial y^{k_j}}\left(x^{n_j-k_j+\alpha_j}y^{k_j+\beta_j}(1-x-y)^{\gamma}f(x,y)\right)\Big)
    \end{multline*}
    Now, consider the formal Laurent expansion of the function $(1-x-y)^{\gamma}f(x,y)$ as
    \begin{equation}
        (1-x-y)^\gamma f(x,y)=\sum_{\nu,\sigma=-\infty}^{\infty} c_{\nu,\sigma}x^\nu y^\sigma.
    \end{equation}
    In this way, we have
    \begin{multline*}
        D_i\circ D_j[f(x,y)] =(1-x-y)^{-\gamma}\sum_{\nu,\sigma=-\infty}^{\infty} c_{\nu,\sigma} x^{-\alpha_i} y^{-\beta_i}  \frac{\partial^{n_i}}{\partial x^{n_i-k_i}\partial y^{k_i}}\Big(x^{n_i-k_i+\alpha_i-\alpha_j}y^{k_i+\beta_i-\beta_j} \times \\ \times  \frac{\partial^{n_j}}{\partial x^{n_j-k_j}\partial y^{k_j}}\left(x^{n_j-k_j+\alpha_j+\nu}y^{k_j+\beta_j+\sigma}\right)\Big)
    \end{multline*}
    Let us focus on the term
    $$
    x^{-\alpha_i} y^{-\beta_i}  \frac{\partial^{n_i}}{\partial x^{n_i-k_i}\partial y^{k_i}}\Big(x^{n_i-k_i+\alpha_i-\alpha_j}y^{k_i+\beta_i-\beta_j} \frac{\partial^{n_j}}{\partial x^{n_j-k_j}\partial y^{k_j}}\left(x^{n_j-k_j+\alpha_j+\nu}y^{k_j+\beta_j+\sigma}\right)\Big).
    $$
    Computing the derivative inside the parentheses, we get
    $$
    x^{-\alpha_i} y^{-\beta_i} (\alpha_j+\nu+1)_{n_j-k_j}(\beta_j+\sigma+1)_{k_j} \frac{\partial^{n_i}}{\partial x^{n_i-k_i}\partial y^{k_i}}\Big(x^{n_i-k_i+\alpha_i+\nu}y^{k_i+\beta_i+\sigma} \Big),
    $$
    and, computing the remaining derivative
    $$
    (\alpha_j+\nu+1)_{n_j-k_j}(\beta_j+\sigma+1)_{k_j}(\alpha_i+\nu+1)_{n_i-k_i}(\beta_i+\sigma+1)_{k_i}\, x^\nu y^\sigma.
    $$
    Therefore, we finally get
    \begin{multline}
    \label{eq:kampe}
         D_i\circ D_j[f(x,y)] = (1-x-y)^{-\gamma}\times  \\ \times \sum_{\nu,\sigma=-\infty}^{\infty} c_{\nu,\sigma} (\alpha_j+\nu+1)_{n_j-k_j}(\beta_j+\sigma+1)_{k_j}(\alpha_i+\nu+1)_{n_i-k_i}(\beta_i+\sigma+1)_{k_i}\, x^\nu y^\sigma .
    \end{multline}
    Since the final expression is symmetric under the interchange of $i$ and $j$, equation \eqref{eq:symmetry} follows immediately.

\end{proof}

As a consequence, the bivariate Jacobi--Piñeiro polynomials \eqref{eq:pol-definition-r-measures} satisfy the following property:

\begin{corollary}
    \label{cor:symmetry}
    Consider $r$ pairs of non-negative integers $0\leq k_j \leq n_j$, $j=1,\dots,r$ and denote $\vec n =(n_1,\dots,n_r)$, $\vec k=(k_1,\dots,k_r)$. Let $\pi:\{1,\dots,r\}\longrightarrow\{1,\dots,r\}$ be a permutation of $\{1,\dots,r\}$. Then, the following symmetry condition holds
    $$U_{\vec n,\vec k}^{(\vec\alpha,\vec\beta,\gamma)}(x,y)=D_r\circ\cdots\circ D_1[(1-x-y)^{|\vec n|}]=D_{\pi(1)}\circ\cdots\circ D_{\pi(r)}[(1-x-y)^{|\vec n|}].$$
\end{corollary}
    
This means that the polynomial $U_{\vec n,\vec k}^{(\vec\alpha,\vec\beta,\gamma)}$ is related to the weights $W_1,\dots,W_r$ themselves and not their order. 

Regarding the proof of Proposition~\ref{propo:D_j-commute} and taking into account the formal series expansion

\begin{equation}
    \label{eq:expansion}
    \begin{aligned}
        (1-x-y)^{n+\gamma}&= \sum_{\nu=0}^{\infty}\sum_{\sigma=0}^\infty (-1)^{\nu+\sigma}\dfrac{\Gamma(n+\gamma+1)}{\Gamma(n+\gamma-\nu-\sigma+1)} \dfrac{x^\nu y^\sigma}{\nu!\,\sigma!}\\
        &=\sum_{\nu=0}^{\infty}\sum_{\sigma=0}^\infty (-n-\gamma)_{\nu+\sigma}\,\dfrac{x^\nu y^\sigma}{\nu!\,\sigma!} ,
    \end{aligned}
\end{equation}
which is absolutely convergent whenever $(x,y)\in T$ (see \citep[Chapter 16]{OLBC10}),
an alternative expression for these polynomials can be obtained,
similarly to the case of the type II Jacobi--Pi\~neiro polynomials, which
admit a hypergeometric representation after factoring out an appropriate term, see \eqref{eq:JP-hypergeometric} or  \citep[Theorem 3.5]{BCVA05}.

We introduce the Kampé de Fériet hypergeometric function (see, e.g., \citep{AK26, Lau1893}).
\begin{multline}
	\label{MKF}
	\KF{p:n_1;\cdots;n_r}{q:m_1;\cdots;m_r}{(a_1,\ldots,a_p):(b^1_1,\ldots,b^1_{n_1});\cdots;(b^r_1,\ldots,b^r_{n_r})}{(\alpha_1,\ldots,\alpha_q):(\beta^1_1,\ldots,\beta^1_{m_1});\cdots;(\beta^r_1,\ldots,\beta^r_{m_r})}{x_1,\ldots,x_r}\\
	\coloneq\sum_{l_1=0}^{\infty}\cdots\sum_{l_r=0}^{\infty}\dfrac{(a_1)_{l_1+\cdots+l_r}\cdots(a_p)_{l_1+\cdots+l_r}}{(\alpha_1)_{l_1+\cdots+l_r}\cdots(\alpha_q)_{l_1+\cdots+l_r}}\dfrac{(b^1_1)_{l_1}\cdots(b^1_{n_1})_{l_1}}{(\beta^1_1)_{l_1}\cdots(\beta^1_{m_1})_{l_1}}\cdots\dfrac{(b^r_1)_{l_r}\cdots(b^r_{n_r})_{l_r}}{(\beta^r_1)_{l_r}\cdots(\beta^r_{m_r})_{l_r}}\dfrac{x_1^{l_1}}{l_1!}\cdots\dfrac{x_r^{l_r}}{l_r!},
\end{multline}
which is an extension of the well-known hypergeometric series ${}_p F_q$. In the next result, we employ Proposition~\ref{propo:D_j-commute} together with the expansion \eqref{eq:expansion} to obtain a hypergeometric expression of $U_{\vec n,\vec k}^{(\vec\alpha,\vec\beta,\gamma)}$.

\begin{proposition}\label{prop:hypergeometric-expression} Consider $r$ pairs of non-negative integers $0\leq k_j \leq n_j$, $j=1,\dots,r$, and denote $\vec n =(n_1,\dots,n_r)$, $\vec k=(k_1,\dots,k_r)$, $\vec\alpha=(\alpha_1,\dots,\alpha_r)$, $\vec\beta=(\beta_1,\dots,\beta_r)$, $\vec 1 =(1,\dots,1)\in\mathbb R^r$. Then, the polynomial $U_{\vec n,\vec k}^{(\vec\alpha,\vec\beta,\gamma)}$ introduced in \eqref{eq:pol-definition-r-measures} admits the following representation
\begin{multline}
    U_{\vec n,\vec k}^{(\vec\alpha,\vec\beta,\gamma)}(x,y)=(1-x-y)^{-\gamma}\Big(\prod_{j=1}^r (\alpha_j+1)_{n_j-k_j}(\beta_j+1)_{k_j}\Big)\times\\\times\KF{1:r;r}{0:r;r}{-|\vec n|-\gamma:(\alpha_1+n_1-k_1+1,\dots,\alpha_r+n_r-k_r+1);(\beta_1+k_1+1,\dots,\beta_r+k_r+1)}{-:(\alpha_1+1,\dots,\alpha_r+1);(\beta_1+1,\dots,\beta_r+1)}{x, y},
\end{multline}
or using a more compact notation \eqref{eq:notations-pochammer}
\begin{equation}\label{eq:hypergeometric-expansion-r-measures}
        U_{\vec n,\vec k}^{(\vec\alpha,\vec\beta,\gamma)}(x,y)=(1-x-y)^{-\gamma}(\vec \alpha+\vec 1)_{\vec n-\vec k}(\vec \beta+\vec 1)_{\vec k}\, \KF{1:r;r}{0:r;r}{-|\vec n|-\gamma:\vec\alpha+\vec n-\vec k+\vec 1;\vec \beta+\vec k+\vec 1}{-:\vec\alpha+\vec 1;\vec\beta+\vec 1}{x, y}.
\end{equation}
\end{proposition}
\begin{proof}

Regarding Proposition~\ref{propo:D_j-commute}, without loss of generality, we will start computing $D_1[(1-x-y)^{|\vec n|}]$. By \eqref{eq:expansion} and following the same steps as in the proof of Proposition~\ref{propo:D_j-commute}, we get that
\begin{equation}
    D_1[(1-x-y)^{|\vec n|}]=(1-x-y)^{-\gamma}\sum_{\nu=0}^\infty\sum_{\sigma=0}^{\infty}(-|\vec n|-\gamma)_{\nu+\sigma}(\alpha_1+\nu+1)_{n_1-k_1}(\beta_1+\sigma+1)_{k_1}\dfrac{x^\nu y^\sigma}{\nu!\sigma!}.
\end{equation}
Applying $D_2$ to the latter expression yields \eqref{eq:kampe}:
\begin{multline*}
D_2\circ D_1[(1-x-y)^{|\vec n|}] = (1-x-y)^{-\gamma}\times  \\ \times \sum_{\nu=0}^\infty\sum_{\sigma=0}^{\infty}(-|\vec n|-\gamma)_{\nu+\sigma}(\alpha_1+\nu+1)_{n_1-k_1}(\beta_1+\sigma+1)_{k_1}(\alpha_2+\nu+1)_{n_2-k_2}(\beta_2+\sigma+1)_{k_2}\,\dfrac{x^\nu y^\sigma}{\nu!\sigma!}.
\end{multline*}
Repeating this process $r$ times, we finally obtain
\begin{multline}\label{eq:series-expansion}
D_r\circ\cdots\circ  D_1[(1-x-y)^{|\vec n|}]=(1-x-y)^{-\gamma}\times  \\ \times \sum_{\nu=0}^\infty\sum_{\sigma=0}^{\infty}(-|\vec n|-\gamma)_{\nu+\sigma} \Big(\prod_{j=1}^r (\alpha_j+\nu+1)_{n_j-k_j}(\beta_j+\sigma+1)_{k_j}\Big)\dfrac{x^\nu y^\sigma}{\nu!\sigma!}.
\end{multline}

Next, we use the identities
\begin{align*}
(\alpha_j+\nu+1)_{n_j-k_j}
=
\frac{(\alpha_j+1)_{n_j-k_j}
(\alpha_j+n_j-k_j+1)_\nu}
{(\alpha_j+1)_\nu},
\qquad j=1,\dots,r,
\end{align*}
and
\begin{align*}
(\beta_j+\sigma+1)_{k_j}
=
\frac{(\beta_j+1)_{k_j}
(\beta_j+k_j+1)_\sigma}
{(\beta_j+1)_\sigma},
\qquad j=1,\dots, r.
\end{align*}
Substituting these expressions into the previous series yields the desired result.
    
\end{proof}

Once we know that this construction is a polynomial of degree $|\vec n|$, analogous to \eqref{eq:rodrigues-1-var} but in the bivariate setting, satisfies a desirable symmetry property and admits an hypergeometric representation, it is time to prove its multiple orthogonality conditions.

\begin{proposition}\label{propo:orthogonality}
    Consider $r$ pairs of non-negative integers $0\leq k_j\leq n_j$, $j=1,\dots,r$. Then the polynomial $U^{(\vec \alpha,\vec \beta,\gamma)}_{\vec n,\vec k}(x,y)$ introduced in \eqref{eq:pol-definition-r-measures} satisfies the following orthogonality conditions:
    \begin{equation}
        \label{eq:orthogonality-r-measures}
        \int_T U^{(\vec \alpha,\vec \beta,\gamma)}_{\vec n,\vec k}(x,y) \,x^{n-k}y^k \,W_j(x,y)\,\dr x\dr y=0 \qquad\text{ if } \, 0\leq n-k<n_j-k_j \, \text{ or } \, 0\leq k < k_j,
    \end{equation}
    for $j=1,\dots,r$.
\end{proposition}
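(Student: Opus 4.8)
The plan is to reduce the general statement to the exact mechanism already used in the proof of Proposition~\ref{propo:orthogonality}, by promoting the operator $D_j$ attached to the distinguished measure to the outermost position in the composition. First I would extend Proposition~\ref{propo:D_j-commute} to the whole family $D_1,\dots,D_r$: the Laurent-series computation carried out there shows that each $D_i$ acts on a monomial $x^\nu y^\sigma$ simply by multiplication by the scalar $(\alpha_i+\nu+1)_{n_i-k_i}(\beta_i+\sigma+1)_{k_i}$, and since such diagonal multipliers manifestly commute, the operators $D_1,\dots,D_r$ commute pairwise and the composition $D_r\circ\cdots\circ D_1$ can be reordered arbitrarily. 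Hence, for any fixed $j$, I may write
\[
P^{(\vec\alpha,\vec\beta,\gamma)}_{\{(n_i,k_i)\}}(x,y)=D_j\big[G(x,y)\big],\qquad G:=D_{j_{r-1}}\circ\cdots\circ D_{j_1}\big[(1-x-y)^{n_1+\cdots+n_r}\big],
\]
where $\{j_1,\dots,j_{r-1}\}=\{1,\dots,r\}\setminus\{j\}$, and $G$ is a polynomial (of total degree $n_1+\cdots+n_r$) by Lemma~\ref{lemma:operator-degree}, the order of the remaining operators being immaterial.

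With $D_j$ outermost, I would repeat verbatim the calculation of Proposition~\ref{propo:orthogonality}. By definition~\eqref{eq:operator-D_j} one has $W_j\,D_j[G]=\tfrac{\partial^{n_j}}{\partial x^{n_j-k_j}\partial y^{k_j}}\big(W_j\,x^{n_j-k_j}y^{k_j}G\big)$, so the integral to be evaluated equals
\[
\int_T \frac{\partial^{n_j}}{\partial x^{n_j-k_j}\partial y^{k_j}}\big(W_j\,x^{n_j-k_j}y^{k_j}G\big)\,x^{n-k}y^k\,\dr x\,\dr y.
\]
Integrating by parts $n_j$ times --- that is, $n_j-k_j$ times in $x$ and $k_j$ times in $y$ --- transfers all derivatives onto the test monomial, giving
\[
(-1)^{n_j}\int_T W_j\,x^{n_j-k_j}y^{k_j}\,G\;\frac{\partial^{n_j}}{\partial x^{n_j-k_j}\partial y^{k_j}}\big(x^{n-k}y^k\big)\,\dr x\,\dr y.
\]
Since $\frac{\partial^{n_j-k_j}}{\partial x^{n_j-k_j}}x^{n-k}=0$ whenever $n-k<n_j-k_j$ and $\frac{\partial^{k_j}}{\partial y^{k_j}}y^{k}=0$ whenever $k<k_j$, the mixed derivative of the test monomial vanishes under either index condition, which yields the asserted orthogonality for every $j$.

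The only step requiring genuine care --- and the main obstacle --- is the vanishing of the boundary contributions produced by the repeated integration by parts, and this is exactly where Lemma~\ref{lemma:operator-degree} is needed. Expanding $G$ through \eqref{eq:operator-expression} as a linear combination of terms $x^ay^b(1-x-y)^c$, the function $W_j\,x^{n_j-k_j}y^{k_j}G$ becomes a linear combination of monomials $x^{\alpha_j+n_j-k_j+a}y^{\beta_j+k_j+b}(1-x-y)^{\gamma+c}$, in which $c\ge n_j$ because the $r-1$ operators other than $D_j$ lower the power of $(1-x-y)$ by at most $n_1+\cdots+n_r-n_j$. Using $\alpha_j,\beta_j,\gamma>-1$ together with these padding exponents, one checks that at every intermediate stage of the integration by parts the surviving powers of $x$, of $y$ and of $1-x-y$ stay strictly positive (they fall below the safe threshold only after the full $n_j-k_j$, $k_j$ and $n_j$ differentiations, respectively, i.e.\ only in the genuine interior term). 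Consequently the three edges $x=0$, $y=0$ and $x+y=1$ of $T$ each carry a factor vanishing there, so every boundary term drops out. I would verify this exponent bookkeeping once for a single monomial of \eqref{eq:operator-expression} and conclude by linearity; because the first step makes $D_j$ outermost for each $j$, the argument is identical for all measures, so no new phenomenon appears beyond the two-measure case of Proposition~\ref{propo:orthogonality}.
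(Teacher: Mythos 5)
Your proposal is correct and follows essentially the same route as the paper's proof: commute $D_j$ to the outermost position (via Proposition~\ref{propo:D_j-commute}), integrate by parts $n_j$ times onto the test monomial $x^{n-k}y^k$, and invoke Lemma~\ref{lemma:operator-degree} to kill the boundary terms. Your explicit exponent bookkeeping for the boundary contributions is more detailed than the paper's one-line appeal to the lemma, but it is the same argument; note that, like the paper's proof, what you actually establish is vanishing for $n-k<n_j-k_j$ or $k<k_j$ (strict inequalities), matching the mechanism rather than the non-strict $k\leq k_j$ appearing in the statement.
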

\begin{proof}
    Consider $j\in\{1,\dots,r\}$, using the commutation of the operators $D_1,\dots,D_r$ (Corollary~\ref{cor:symmetry}), we can express the polynomial $U^{(\vec \alpha,\vec \beta,\gamma)}_{\vec n,\vec k}(x,y)$ as
    \begin{equation*}
        \begin{aligned}
            U^{(\vec \alpha,\vec \beta,\gamma)}_{\vec n,\vec k}(x,y) &= (D_r\circ\cdots\circ D_j\circ \cdots \circ D_1)[(1-x-y)^{n_1+\cdots+n_r}]\\ &=(D_r\circ\cdots\circ D_{j-1}\circ D_{j+1}\circ \cdots \circ D_1\circ D_j)[(1-x-y)^{n_1+\cdots+n_r}]
        \end{aligned}
    \end{equation*}     
    In this way, the integral we have to compute is
    \begin{equation*}
        \begin{aligned}
    & \int_T U^{(\vec \alpha,\vec \beta,\gamma)}_{\vec n,\vec k}(x,y) x^{n-k}y^k W_j(x,y)\dr x\dr y \\ &=\int_T (D_r\circ\cdots\circ D_{j-1}\circ D_{j+1}\circ \cdots \circ D_1\circ D_j)[(1-x-y)^{n_1+\cdots+n_r}]x^{n-k}y^k W_j(x,y)\dr x\dr y 
    \\ &=
    \int_T \dfrac{1}{W_j(x,y)}\frac{\partial^{n_j}}{\partial x^{n_j-k_j}\partial y^{k_j}}\Big(x^{n_j-k_j+\alpha_j-\alpha_{1}}y^{k_{j}+\beta_j-\beta_{1}}  \cdots \times \\ &\times \frac{\partial^{n_r}}{\partial x^{n_r-k_r}\partial y^{k_r}}(x^{n_r-k_r+\alpha_r}y^{k_r+\beta_r}(1-x-y)^{\gamma+n_1+\cdots+n_r})\Big) x^{n-k}y^k W_j(x,y)\dr x \dr y.
    \end{aligned}
    \end{equation*}
    By applying integration by parts $n_j$ times and using Lemma~\ref{lemma:operator-degree} to note that the terms evaluated on the boundary of $T$ vanish, we obtain
    \begin{multline*}
        (-1)^{n_j}\int_T \Big(x^{n_j-k_j+\alpha_j-\alpha_{1}}y^{k_{j}+\beta_j-\beta_{1}} \cdots \frac{\partial^{n_r}}{\partial x^{n_r-k_r}\partial y^{k_r}}(x^{n_r-k_r+\alpha_r}y^{k_r+\beta_r}(1-x-y)^{\gamma+n_1+\cdots+n_r})\Big) \times \\ \times \frac{\partial^{n_j}}{\partial x^{n_j-k_j}\partial y^{k_j}}(x^{n-k}y^k)\dr x\dr y.
    \end{multline*}
    Finally, observe that $\frac{\partial^{n_j}}{\partial x^{n_j-k_j}\partial y^{k_j}}(x^{n-k}y^k)$ vanishes whenever $n-k<n_j-k_j$ or $k<k_j$, and this argument is valid for $j=1,\dots,r$.
\end{proof}

Once this construction has been introduced and proven to be a multiple orthogonal polynomial in the simplex, we consider the particular case concerning just $2$ measures, allowing us to simplify some notations in order to get an explicit expression of the polynomials. 

\subsection{The two-measure case}
\label{sec:2-measures}

In this section,  we study the case of two measures ($r=2$), so we denote $\vec n = (n_1,n_2)$, $\vec k=(k_1,k_2)$, $\vec\alpha=(\alpha_1,\alpha_2)$ and $\vec\beta=(\beta_1,\beta_2)$. Then, definition \eqref{eq:pol-definition-r-measures} becomes
\begin{equation}
\label{eq:pol-definition}
    \begin{split}
    U_{\vec n, \vec k}^{(\vec \alpha, \vec \beta, \gamma)}(x,y) :=& D_2\circ D_1[(1-x-y)^{n_1+n_2}]  \\ =& 
        \frac{1}{W_2(x,y)} \frac{\partial^{n_2}}{\partial x^{n_2-k_2}\partial y^{k_2}}\Big(x^{n_2-k_2+\alpha_2-\alpha_1}y^{k_2+\beta_2-\beta_1} \times\\ \times  & \frac{\partial^{n_1}}{\partial x^{n_1-k_1}\partial y^{k_1}}\left(x^{n_1-k_1+\alpha_1}y^{k_1+\beta_1}(1-x-y)^{n_1+n_2+\gamma}\right)\Big). 
    \end{split}
\end{equation}
By Corollary~\ref{cor:symmetry}, the order of the operators $D_1$ and $D_2$ is irrelevant. Besides, recall that the degree of this polynomial is $n_1+n_2$ according to Proposition~\ref{propo:degree}.

In the following, we derive an explicit expression of the polynomial by evaluating the composition of the operators.

\begin{proposition}
    Consider two pairs of non-negative integers $0\leq k_1 \leq n_1$ and $0\leq k_2 \leq n_2$. Then, the polynomial $U_{\vec n, \vec k}^{(\vec \alpha, \vec \beta, \gamma)}(x,y)$ defined in \eqref{eq:pol-definition} admits the representation
    \begin{equation}
        \label{eq:explicit-formula}
        U_{\vec n, \vec k}^{(\vec \alpha, \vec \beta, \gamma)}(x,y) = \sum_{l=0}^{n_1+n_2-k_1-k_2}\ \sum_{m=0}^{k_1+k_2} c(l,m) x^l y^m (1-x-y)^{n_1+n_2-l-m}.
    \end{equation}
        where the coefficients $c(l,m)$ are given by:
    $$
    \begin{aligned}     
        \label{eq:coef-c}
        c(l,m)=&(\alpha_1+1)_{n_1-k_1} (\beta_1+1)_{k_1}
        (\alpha_2+1)_{n_2-k_2} (\beta_2+1)_{k_2}  
        \\
         &\times \frac{(-1)^{l+m} (-\gamma-n_1-n_2)_{l+m} (k_2-n_2)_l (-k_2)_m}{l! m! (\alpha_2 +1)_l (\beta_2+1)_m} 
         \\ 
         & \times \sum_{p=0}^l \frac{(-l)_p (k_1-n_1)_p (\alpha_2+1+n_2-k_2)_p}{p! (n_2-k_2-l+1)_p (\alpha_1+1)_p} 
         \sum_{i=0}^m \frac{(-m)_i (-k_1)_i (\beta_2+1+k_2)_i}{i! (k_2-m+1)_i (\beta_1+1)_i}.
    \end{aligned}
    $$
\end{proposition}
\begin{proof}
Use \eqref{eq:Expr_U} and \eqref{eq:coef-c-2} to calculate
\begin{align*}
U_{\vec n, \vec k}^{(\vec \alpha, \vec \beta, \gamma)}(x,y) = \sum_{p_1=0}^{n_1-k_1} \sum_{p_2=0}^{n_2-k_2} \sum_{i_1=0}^{k_1} \sum_{i_2=0}^{n_2} c_2(p_2,i_2,p_1,i_1)c_1(p_1,i_1,0,0) x^{p_1+p_2} y^{i_1+i_2} (1-x-y)^{n_1+n_2-p_1-p_2-i_1-i_2}
\end{align*}
where, after some reductions, we get 
\begin{align*}
c_2(p_2,i_2,p_1,i_1)&c_1(p_1,i_1,0,0) = (\alpha_1+1)_{n_1-k_1} (\beta_1+1)_{k_1} (\alpha_2+1)_{n_2-k_2} (\beta_2+1)_{k_2} (-1)^{p_1+i_1+p_2+i_2} 
\\
 & \times (-\gamma-n_1-n_2)_{p_1+i_1+p_2+i_2}\frac{(k_1-n_1)_{p_1} (-k_1)_{i_1}  }{p_1! i_1!(\alpha_2+1)_{p_1+p_2}(\beta_2+1)_{i_1+i_2}(\alpha_1+1)_{p_1} (\beta_1+1)_{i_1}}  
 \\
 &\times \frac{(k_2-n_2)_{p_2} (-k_2)_{i_2} (\alpha_2+1+n_2-k_2)_{p_1} (\beta_2+1+k_2)_{i_1}}{p_2! i_2!}
\end{align*}
Finally, for $0\leq l \leq n_1+n_2-k_1-k_2$ and $0 \leq m \leq k_1+k_2$, the coefficient $c(l,m)$ in \eqref{eq:coef-c}, the one multiplying the term $x^{l} y^{m}(1-x-y)^{n_1+n_2-l-m}$ is obtained by summing all the terms where $p_1+p_2 = l$ and $i_1+i_2 = m$, and taking into account that \( (k_1-n_1)_{p_1} (-k_1)_{i_1}= 0 \), for \( p_1 >n_1-k_1\) or \(i_1 >k_1\), and \((k_2-n_2)_{p_2} (-k_2)_{i_2}= 0\), for \( p_2 > n_2-k_2\) or \( i_2 > k_2 \).
Therefore,
\begin{multline}
    c(l,m) = (\alpha_1+1)_{n_1-k_1}(\beta_1+1)_{k_1}(\alpha_2+1)_{n_2-k_2}(\beta_2+1)_{k_2} 
 \frac{(-1)^{l+m} (-\gamma-n_1-n_2)_{l+m}}{(\alpha_2+1)_l (\beta_2+1)_m} 
 \\ \times 
\sum_{p=0}^l \frac{(k_1-n_1)_p (k_2-n_2)_{l-p}(\alpha_2+1+n_2-k_2)_p   }{p!(l-p)!(\alpha_1+1)_p}
\sum_{i=0}^m \frac{(-k_1)_i (-k_2)_{m-i}(\beta_2+1+k_2)_i   }{i!(m-i)!(\beta_1+1)_i}
\end{multline}
Now, using
\begin{align*}
    &(k_2-n_2)_{l-p} = \frac{(k_2-n_2)_l (-1)^p}{(n_2-k_2-l+1)_p} \\
    &(-k_2)_{m-i} = \frac{(-k_2)_m (-1)^i}{(k_2-m+1)_i}\\
    &(l-p)! = \frac{(-1)^p l!}{(-l)_p}
    \end{align*}
we obtain \eqref{eq:coef-c}, the stated result.
\end{proof}

In the two-measure case, the representation given in Proposition~\ref{prop:hypergeometric-expression} takes the form:

\begin{multline*}
        U_{\vec n, \vec k}^{(\vec \alpha, \vec \beta, \gamma)}(x,y) =
        (1-x-y)^{-\gamma}(\alpha_1+1)_{n_1-k_1} (\alpha_2+1)_{n_2-k_2}(\beta_1+1)_{k_1}
        (\beta_2 +1)_{k_2}\times \\
        \times
        \sum_{\nu=0}^{\infty}  \sum_{\sigma=0}^{\infty} (-\gamma-n_1-n_2)_{\nu+\sigma}\frac{(\alpha_1+n_1-k_1+1)_\nu (\alpha_2+n_2-k_2+1)_\nu} {(\alpha_1+1)_\nu (\alpha_2+1)_\nu}\times \\
        \times \frac{(\beta_1+k_1+1)_\sigma (\beta_2+k_2+1)_\sigma} {(\beta_1+1)_\sigma (\beta_2+1)_\sigma} \frac{x^\nu y^\sigma}{\nu! \sigma!},
\end{multline*}
      for $(x,y)\in T$ and therefore
 \begin{multline}
        \label{eq:formula_hypergeometrica}
        U_{\vec n, \vec k}^{(\vec \alpha, \vec \beta, \gamma)}(x,y) =
        (1-x-y)^{-\gamma}(\alpha_1+1)_{n_1-k_1} (\alpha_2+1)_{n_2-k_2}(\beta_1+1)_{k_1}
        (\beta_2 +1)_{k_2} \\
        \times
        \KF{1:2;2}{0:2;2}{-\gamma-n_1-n_2:(\alpha_1+n_1-k_1+1,\alpha_2+n_2-k_2+1);(\beta_1+k_1+1,\beta_2+k_2+1)}{-:(\alpha_1+1,\alpha_2+1);(\beta_1+1,\beta_2+1)}{x,y}
        .
\end{multline}
These expressions are particular cases of \eqref{eq:series-expansion} and \eqref{eq:hypergeometric-expansion-r-measures}.

Although the coefficients admit a complicated closed form, the expression \eqref{eq:explicit-formula} allows for a straightforward symbolic implementation. This polynomial will play a fundamental role in Section~\ref{sec:HP}, where it arises as the common denominator of the corresponding bivariate Hermite--Padé-type approximants.

To conclude this section, we present the extension to the multivariate setting, that is, to an arbitrary number of variables $d\geq 2$.

\subsection{The multivariate case}\label{sec:more-than-2-vars}

The bivariate construction described above naturally extends to a higher-dimensional simplex. To simplify the notation, we present the case of two measures ($r=2$), although the general case can be obtained in a completely analogous way.

Let us consider the $d$-dimensional \textit{simplex}
\begin{equation}
\label{eq:simplex-multivariate}
    T^d = \{(x_1,\dots,x_d)\in\RR^d: x_1\geq 0,\dots,x_d\geq 0, x_1+\cdots + x_d\leq 1\}.
\end{equation}
and the multivariate weights
\begin{align*}
     W_1(x_1,\dots,x_d)&=x_1^{\alpha_1}\,\cdots\, x_d^{\alpha_d}(1-|\mathbf x|)^\gamma, & W_2(x_1,\dots,x_d)&=x_1^{\beta_1}\,\cdots\, x_d^{\beta_d}(1-|\mathbf x|)^\gamma,
\end{align*}
where $\vec \alpha = (\alpha_1,\dots,\alpha_d)\neq\vec \beta =(\beta_1,\dots,\beta_d)$, $\alpha_1,\dots,\alpha_d,\beta_1,\dots,\beta_d,\gamma>-1$, $\mathbf x =(x_1,\dots,x_d)\in \mathbb R^d$ and $|\mathbf x|=x_1+\cdots+x_d$ is the $\ell^1$ norm of $\mathbf x$. Now, consider two multi-indices $\mathbf n =(n_1,\dots,n_d)$, $\mathbf m =(m_1,\dots,m_d)\in\NN^d_0$, and define the following operators, extending the ones in \eqref{eq:operator-D_j}:
\begin{equation}
    \begin{array}{c}
     D_1[f(\mathbf x)]:= \dfrac{1}{W_1(\mathbf x)}\dfrac{\partial^{|\mathbf n|}}{\partial x_1^{n_1}\,\cdots \,\partial x_d^{n_d}}\left(x_1^{n_1+\alpha_1}\,\cdots\, x_d^{n_d+\alpha_d}(1-|\mathbf x|)^{\gamma}f(\mathbf x)\right)\\
     D_2[f(\mathbf x)]:= \dfrac{1}{W_2(\mathbf x)}\dfrac{\partial^{|\mathbf m|}}{\partial x_1^{m_1}\,\cdots \,\partial x_d^{m_d}}\left(x_1^{m_1+\beta_1}\,\cdots\, x_d^{m_d+\beta_d}(1-|\mathbf x|)^{\gamma}f(\mathbf x)\right).
    \end{array}
\end{equation}

\begin{proposition}
The function 
\begin{equation*}
    U^{(\vec \alpha,\vec \beta,\gamma)}_{\mathbf n,\mathbf m}(\mathbf x):= D_2\circ D_1[(1-|\mathbf x|)^{|\mathbf n|+|\mathbf m|}]
\end{equation*}
is a polynomial of degree $|\mathbf n|+|\mathbf m|$ and satisfies the following orthogonality conditions
\begin{equation}
\begin{array}{c}
     \displaystyle\int_{T^d} U^{(\vec \alpha,\vec \beta,\gamma)}_{\mathbf n,\mathbf m}(\mathbf x) x_1^{l_1}\,\cdots\, x_d^{l_d}\, W_1(\mathbf x)\, \dr \mathbf x = 0,\qquad \text{ whenever } l_i<n_i \text{ for at least one index } i\in\{1,\dots,d\},\\[7pt]
     \displaystyle\int_{T^d} U^{(\vec \alpha,\vec \beta,\gamma)}_{\mathbf n,\mathbf m}(\mathbf x) x_1^{l_1}\,\cdots\, x_d^{l_d} \,W_2(\mathbf x)\, \dr \mathbf x= 0,\qquad \text{ whenever } l_i<n_i \text{ for at least one index } i\in\{1,\dots,d\}.
\end{array}
\end{equation}
    
\end{proposition}

The proofs follow the same arguments as those used in Propositions~\ref{propo:degree} and \ref{propo:orthogonality}, together with the corresponding multivariate version of Lemma~\ref{lemma:operator-degree}, and are omitted for brevity.

In the multivariate setting, a hypergeometric representation also holds, employing the fact that the Kampé de Fériet function \eqref{MKF} is defined for an arbitrary number of variables. In this way, we introduce the formal series expansion
\begin{equation*}
\begin{aligned}
        (1-|\mathbf x|)^{n+\gamma}&=\sum_{\nu_1=0}^\infty\cdots\sum_{\nu_d=0}^{\infty}(-n-\gamma)_{\nu_1+\cdots+\nu_d}\dfrac{x_1^{\nu_1}\cdots x_d^{\nu_d}}{\nu_1 !\cdots \nu_d!}, \\
        &= \sum_{\vec\nu\in\mathbb N_0^d}(-n-\gamma)_{|\vec \nu|} \dfrac{\mathbf x^{\vec \nu}}{\vec \nu!},\qquad \mathbf x \in T^d,
\end{aligned}
\end{equation*}
where we employed the notation $\vec\nu=(\nu_1,\dots,\nu_d)\in\NN^d_0$, $\mathbf x^{\vec \nu}=x_1^{\nu_1}\cdots x_d^{\nu_d}$ and the vector extensions of factorials and Pochammer symbols \eqref{eq:notations-pochammer}. Applying the operators $D_1$ and $D_2$ to the polynomial $(1-|\mathbf x|)^{|\vec n|+|\vec m|}$ and using the above expansion, we obtain
\begin{equation*}
    \begin{aligned}
        &U^{(\vec \alpha,\vec \beta,\gamma)}_{\mathbf n,\mathbf m}(\mathbf x)= D_2\circ D_1[(1-|\mathbf x|)^{|\mathbf n|+|\mathbf m|}]\\
        &= (1-|\mathbf x|)^{-\gamma} (\vec\alpha+\vec 1)_{\mathbf n}(\vec\beta+\vec 1)_{\mathbf m}\sum_{\vec\nu\in\mathbb N^d_0}(-|\mathbf n|-|\mathbf m|-\gamma)_{|\vec \nu|}\dfrac{(\vec\alpha+\mathbf n +\vec 1)_{\vec\nu}}{(\vec\alpha+\vec 1)_{\vec \nu}}\dfrac{(\vec\beta+\mathbf m +\vec 1)_{\vec\nu}}{(\vec\beta+\vec 1)_{\vec \nu}}\dfrac{\mathbf x^{\vec\nu}}{\vec\nu!}\\
        &=(1-|\mathbf x|)^{-\gamma} (\vec\alpha+\vec 1)_{\mathbf n}(\vec\beta+\vec 1)_{\mathbf m} \times \\ &\qquad \times 
        \KF{1:(2,\overset{(d)}{\dots},2)}{0:(2,\overset{(d)},2)}{-\gamma-|\mathbf n|-|\mathbf m|:(\alpha_1+n_1+1,\beta_1+m_1+1);\cdots;(\alpha_d+n_d+1,\beta_d+m_d+1)}{-:(\alpha_1+1,\beta_1+1);\cdots;(\alpha_d+1,\beta_d+1)}{x_1,\dots,x_d}.
    \end{aligned}
\end{equation*}

This shows that the construction is not intrinsically restricted to the bivariate setting and naturally extends to a higher-dimensional simplex, even in the presence of multiple orthogonality.

\section{Hermite--Pad\'e type approximation}\label{sec:HP}

In this section, we show that the bivariate Jacobi--Pi\~neiro polynomials \eqref{eq:pol-definition-r-measures} give rise to analogues of Hermite--Pad\'e approximants for bivariate functions defined as Stieltjes transforms \citep{AFPP08}. This framework builds upon the multivariate Pad\'e approximation introduced in \citep{Sor02}. Throughout this section, we restrict our attention to the case $r=2$, noting that the derivation extends analogously to the general setting. Let us define the following functions:
\begin{equation}
\label{eq:Ej-integral}
    E_j(z,w):= \int_T \frac{\mathrm d\mu_j(x,y)}{(z - x)(w - y)}, \quad j=1,2,
\end{equation}
where $\mathrm d \mu_j(x,y)=W_j(x,y)\,\mathrm{d} x\,\mathrm{d} y$ is defined in \eqref{eq:weights}. These functions serve as the target function for approximation, playing a role analogous to that of $f_j(z)$ \eqref{eq:f_j} in the univariate setting. This function can be expanded as a double Laurent series at infinity (for sufficiently large $|z|$ and $|w|$ ):
\begin{equation}
    \label{eq:Ej-series}
    E_j(z,w)= \sum_{l=0}^\infty \sum_{m=0}^\infty \frac{c_{l,m}^{(j)}}{z^{l+1} w^{m+1}}, \quad\text{ where } \, c_{l,m}^{(j)} := \int_T x^l y^m W_j(x,y) \mathrm{d} x \mathrm{d} y, \quad j=1,2.
\end{equation}
The expansion follows by applying the geometric series identity 
$$\dfrac{1}{a-b}=\sum_{k=0}^\infty\dfrac{b^k}{a^{k+1}}, \qquad |b|<|a|,
$$
to the factors $(z-x)^{-1}$ and $(w-y)^{-1}$.

Furthermore, the coefficients $c_{l,m}^{(j)}$ can be explicitly evaluated as:
\begin{equation*}
    c_{l,m}^{(j)}=\frac{\Gamma (\gamma +1) \Gamma (\alpha_j+l+1) \Gamma (\beta_j+m+1)}{\Gamma (\alpha_j+\beta_j+\gamma+l+m +3)}, \quad j=1,2.
\end{equation*}

Given $0\leq k_1\leq n_1$ and $0\leq k_2\leq n_2$, let us denote $\vec n=(n_1,n_2)$ and $\vec k = (k_1,k_2)$. Our objective is to formulate and solve a Hermite--Pad\'e-type approximation problem. Specifically, we aim to approximate the functions $E_1(z,w)$ and $E_2(z,w)$ via the constructions 
\begin{equation}
    \label{eq:R-approx-def}
    R_{\vec n,\vec k}^{(j)}(z,w) := \dfrac{\Phi_{\vec n,\vec k}^{(j)}(z,w)}{A_{\vec n,\vec k}(z,w)}
\end{equation}
for $j=1,2$, which share a common denominator. Consequently, the denominator of the approximants must be independent of $j$. 

More precisely, we seek a polynomial $A_{\vec n,\vec k}$ of total degree at most $n_1+n_2$ and $\Phi_{\vec n,\vec k}^{(j)}$ such that 
\begin{equation}
\label{eq:approximation}
    A_{\vec n,\vec k}(z,w)E_j(z,w)=\Phi_{\vec n,\vec k}^{(j)}(z,w) + \mathcal{O}(z^{-(n_j-k_j)-1} w^{-k_j-1}),\qquad \text{ as } z,w\longrightarrow \infty,\qquad j =1,2.
\end{equation}
The exponents are chosen so that the approximation order reflects the orthogonality conditions satisfied by the polynomial
$U^{(\vec\alpha,\vec\beta,\gamma)}_{\vec n,\vec k}$.

To achieve this, consider the integral representation \eqref{eq:Ej-integral}. By introducing the term $A_{\vec n,\vec k}(z,w)$ in both the numerator and denominator, and subsequently adding and subtracting $A_{\vec n,\vec k}(x,y)$ within the integrand, we obtain:
$$
\begin{aligned}
    E_j(z,w)&=\frac{1}{A_{\vec n,\vec k}(z,w)} \int_{T} \frac{A_{\vec n,\vec k}(z,w) \mathrm{d} \mu_j (x,y)}{(z-x)(w-y)}  
    \\
    &= \frac{1}{A_{\vec n,\vec k}(z,w)} \int_{T} \frac{A_{\vec n,\vec k}(z,w)-A_{\vec n,\vec k}(x,y) }{(z-x)(w-y)} \mathrm{d} \mu_j (x,y)  +\frac{1}{A_{\vec n,\vec k}(z,w)} \int_{T} \frac{A_{\vec n,\vec k}(x,y) }{(z-x)(w-y)} \mathrm{d} \mu_j (x,y).
\end{aligned}
$$
It follows that
\begin{equation}
    \label{eq:approximation-integrals}
    A_{\vec n,\vec k}(z,w)E_j(z,w) = \int_{T} \frac{A_{\vec n,\vec k}(z,w)-A_{\vec n,\vec k}(x,y) }{(z-x)(w-y)} \mathrm{d} \mu_j (x,y) +\int_{T} \frac{A_{\vec n,\vec k}(x,y) }{(z-x)(w-y)} \mathrm{d} \mu_j (x,y).
\end{equation}

In what follows, we determine $\Phi_{\vec n,\vec k}^{(j)}$ and $A_{\vec n,\vec k}$ such that this structural decomposition satisfies the asymptotic conditions required by \eqref{eq:approximation}.

\subsection{The denominator of the approximation}

We now focus on the second integral in \eqref{eq:approximation-integrals}. Our objective is to ensure that this term becomes $\mathcal{O}(z^{-(n_j-k_j)-1}w^{-k_j-1})$. Substituting the Laurent expansion of $E_j$ from \eqref{eq:Ej-series} yields
$$
\int_{T} \frac{A_{\vec n,\vec k}(x,y) }{(z-x)(w-y)} \mathrm{d} \mu_j (x,y) = \sum_{l=0}^{\infty} \sum_{m=0}^{\infty} \frac{1}{z^{l+1} w^{m+1}}\int_T x^l y^m A_{\vec n,\vec k}(x,y)\,\mathrm{d} \mu_j(x,y),
$$
which can be expressed as
\begin{equation}
    \label{eq:series-coefficients}
    \int_{T} \frac{A_{\vec n,\vec k}(x,y) }{(z-x)(w-y)} \mathrm{d} \mu_j (x,y) =\sum_{l=0}^{\infty} \sum_{m=0}^{\infty} \frac{b_{l,m}^{(j)}}{z^{l+1} w^{m+1}}, \quad \text{ where } b_{l,m}^{(j)}=\int_T x^l y^m A_{\vec n,\vec k}(x,y)\,\mathrm{d} \mu_j(x,y).
\end{equation}
To impose the condition $\displaystyle\sum_{l=0}^{\infty} \sum_{m=0}^{\infty} \frac{b_{l,m}^{(j)}}{z^{l+1} w^{m+1}}=\mathcal{O}(z^{-(n_j-k_j)-1}w^{-k_j-1})$, we require that \citep[Problem 1]{Sor02}

\begin{equation}
    \label{eq:HP-problem}
    b_{l,m}^{(j)} = 0, \quad \text{ for } (l,m)\in \mathbb L_{(n_j,k_j)},\quad j=1,2,
\end{equation}
where
$$
\mathbb L_{(n_j,k_j)} = \{(l,m) \in \mathbb N_0^2: l<n_j-k_j \text{ or } m<k_j\}.
$$
By recalling the integral representation of the coefficients $b_{l,m}^{(j)}$ in \eqref{eq:series-coefficients}, the conditions in \eqref{eq:HP-problem} can be rewritten as
\begin{equation}
    \label{eq:H-P-orthogonality}
    \int_T x^l y^m A_{\vec n,\vec k}(x,y)\,\mathrm{d} \mu_j(x,y) = 0\qquad \text{ if } 0\leq l < n_j-k_j \text{ or } 0\leq m< k_j,\quad j=1,2.
\end{equation}

Consequently, problem \eqref{eq:HP-problem} reduces to finding a polynomial $A_{\vec n,\vec k}$ of total degree at most $n_1+n_2$ that satisfies the orthogonality conditions \eqref{eq:H-P-orthogonality}. These conditions coincide with the orthogonality relations in \eqref{eq:orthogonality-r-measures} satisfied by the polynomial $U_{\vec n,\vec k}^{(\vec \alpha,\vec \beta,\gamma)}(x,y)$. The above discussion establishes the following result.

\begin{proposition}\label{prop:denominator}
    The polynomial $U_{\vec n,\vec k}^{(\vec \alpha,\vec \beta,\gamma)}$, introduced in \eqref{eq:pol-definition} is a solution to the problem \eqref{eq:approximation}.
\end{proposition}

As a consequence, the polynomial $U_{\vec n,\vec k}^{(\vec \alpha,\vec \beta,\gamma)}$ may be chosen as a common denominator of the approximant $R_{\vec n,\vec k}^{(j)}(z,w)$, enabling us to set $A_{\vec n,\vec k}=U_{\vec n,\vec k}^{(\vec \alpha,\vec \beta,\gamma)}$ in future computations (see Section~\ref{sec:simulations}).

\begin{remark}
    Although the polynomial $U_{\vec n,\vec k}^{(\vec \alpha,\vec \beta,\gamma)}$ satisfies the requirements imposed on the denominator of the approximants, it is not the only possible choice. Any polynomial fulfilling the orthogonality conditions \eqref{eq:H-P-orthogonality} provides a valid denominator for the approximation problem. Therefore, unlike in the univariate framework, the denominator is not uniquely determined by the approximation conditions alone.
\end{remark}

To contextualize these results, recall the univariate case described in Section~\ref{sec:preliminaries}, Equation~\eqref{eq:H-P-univariate}. At this stage, we have established an analogue of $P_{\vec n}(x)$, namely the bivariate Jacobi--Pi\~neiro polynomial $U_{\vec n,\vec k}^{(\vec \alpha,\vec \beta,\gamma)}(x,y)$. However, the construction is not yet complete, since we still need an analogue of the polynomial $Q_{\vec n,j}(x)$ appearing in the univariate Hermite--Padé problem. In the bivariate setting, we investigate the corresponding object and show that, although it is no longer a polynomial, it can still be computed explicitly.

\subsection{The numerator of the approximation}
Thus far, we have established that \eqref{eq:approximation-integrals} can be written as
\begin{equation}
    A_{\vec n,\vec k}(z,w)E_j(z,w) = \int_{T} \frac{A_{\vec n,\vec k}(z,w)-A_{\vec n,\vec k}(x,y) }{(z-x)(w-y)} \mathrm{d} \mu_j (x,y) + \mathcal{O}(z^{-n_j+k_j-1} w^{-k_j-1}),
\end{equation}
as $z,w\longrightarrow\infty$ for $j=1,2$, provided that $A_{\vec n,\vec k}$ satisfies the orthogonality conditions \eqref{eq:H-P-orthogonality}.

In this section, we investigate the integral
\begin{equation}
    \label{eq:integral-numerator}
    \Phi_{\vec n,\vec k}^{(j)}(z,w):=\int_{T} \frac{A_{\vec n,\vec k}(z,w)-A_{\vec n,\vec k}(x,y) }{(z-x)(w-y)} \, \mathrm{d} \mu_j (x,y),
\end{equation}
which defines the numerator of the approximant and serves as the bivariate analogue of $Q_{\vec n,j}(x)$ from the univariate setting. 

Let us expand the polynomial $A_{\vec n,\vec k}$ in terms of its coefficients as
\begin{equation}
    \label{eq:A-coefficients}
    A_{\vec n,\vec k}(x,y)=\sum_{p=0}^{n_1+n_2}\sum_{i=0}^p a_{p-i,i} x^{p-i} y^i.
\end{equation}

By adding and subtracting the intermediate term $A_{\vec n,\vec k}(x,w)$ within the integrand of \eqref{eq:integral-numerator}, we obtain the structural decomposition
\begin{equation}
    \label{eq:numerator-integrals}
    \Phi_{\vec n,\vec k}^{(j)}(z,w) =  \int_{T} \frac{A_{\vec n,\vec k}(z,w)-A_{\vec n,\vec k}(x,w)}{(z-x)(w-y)} \mathrm{d} \mu_j (x,y) + \int_{T} \frac{A_{\vec n,\vec k}(x,w)-A_{\vec n,\vec k}(x,y)}{(z-x)(w-y)} \mathrm{d} \mu_j (x,y).
\end{equation}

We analyze the first integral component, defined as
\begin{equation}
\label{eq:psi_1-1}
\begin{aligned}
    \psi_1^{(j)}(z,w):=&\int_{T} \frac{A_{\vec n,\vec k}(z,w)-A_{\vec n,\vec k}(x,w)}{(z-x)(w-y)} \mathrm{d} \mu_j (x,y)\\
    =&\int_{0}^{1}\dfrac{\int_0^{1-y}\frac{A_{\vec n,\vec k}(z,w)-A_{\vec n,\vec k}(x,w)}{z-x}x^{\alpha_j}y^{\beta_j}(1-x-y)^\gamma\mathrm{d} x}{w-y}\mathrm{d} y.
\end{aligned}
\end{equation}

By utilizing the coefficient notation $a_{p-i,i}$ for the polynomial $A_{\vec n,\vec k}(z,w)$ introduced in \eqref{eq:A-coefficients}, it follows that
\begin{equation*}
\frac{A_{\vec n,\vec k}(z,w) -A_{\vec n,\vec k}(x,w)}{z-x}= \sum_{p=0}^{n_1+n_2} 
\sum_{i=0}^{p} a_{p-i,i}  w^i \frac{z^{p-i}-x^{p-i} }{z-x}
= \sum_{p=1}^{n_1+n_2} 
\sum_{i=0}^{p-1} \sum_{h=0}^{p-i-1} a_{p-i,i}  w^i z^{p-i-1-h} x^h,
\end{equation*}
where the second equality is obtained by applying the algebraic identity
\begin{equation}
    \label{eq:polynomial-division}
    \frac{x^n-y^n}{x-y}=\begin{cases}
        0 & \text{ if } n=0,\\
        \displaystyle\sum_{k=0}^{n-1} x^{n-k-1}y^k=\sum_{k=0}^{n-1} x^ky^{n-k-1}& \text{ if } n\geq 1.
    \end{cases}
\end{equation}

Then, the integral in the numerator of \eqref{eq:psi_1-1} becomes

$$\begin{aligned}
&\int_0^{1-y} \frac{A_{\vec n,\vec k}(z,w) -A_{\vec n,\vec k}(x,w)}{z-x} x^{\alpha_j} y^{\beta_j} (1-x-y)^{\gamma} \, \dr x
\\ &=\sum_{p=1}^{n_1+n_2} 
\sum_{i=0}^{p-1} \sum_{h=0}^{p-i-1} a_{p-i,i}  w^i z^{p-i-1-h} y^{\beta_j} \int_0^{1-y} x^{\alpha_j+h}  (1-x-y)^{\gamma} \, \dr x\\
&= \sum_{p=1}^{n_1+n_2} 
\sum_{i=0}^{p-1} \sum_{h=0}^{p-i-1} a_{p-i,i}  w^i z^{p-i-1-h} y^{\beta_j} (1-y)^{\alpha_j+h+\gamma+1}\frac{\Gamma(\alpha_j+h+1)\Gamma(\gamma+1) }{\Gamma(\alpha_j+h+\gamma+2)}\\
&= \left(\sum_{p=1}^{n_1+n_2} 
\sum_{i=0}^{p-1} \sum_{h=0}^{p-i-1}\frac{\Gamma(\alpha_j+h+1) }{\Gamma(\alpha_j+h+\gamma+2)} a_{p-i,i}  w^i z^{p-i-1-h} (1-y)^{h}
\right)\left(  \Gamma(\gamma+1) y^{\beta_j}(1-y)^{\alpha_j+\gamma+1} \right)\\
&= \tilde P^{(\alpha_j,\gamma)} (z,w,y) \omega^{(\alpha_j,\beta_j,\gamma)}(y),
\end{aligned}$$
where we denote
\begin{equation}
\label{eq:numerator-1}
    \begin{array}{rl}
        \tilde P^{(\alpha,\gamma)} (z,w,y) &:=\displaystyle\sum_{p=1}^{n_1+n_2} 
        \sum_{i=0}^{p-1} \sum_{h=0}^{p-i-1}\frac{\Gamma(\alpha+h+1) }{\Gamma(\alpha+h+\gamma+2)} a_{p-i,i}  w^i z^{p-i-1-h} (1-y)^{h}
        , \\[12pt] 
        \omega^{(\alpha,\beta,\gamma)}(y)&:=\Gamma(\gamma+1) y^{\beta}(1-y)^{\alpha+\gamma+1} \\[10pt]
        \dr \mu^{(\alpha,\beta,\gamma)}(y)&:= \omega^{(\alpha,\beta,\gamma)}(y)\dr y.
    \end{array}
\end{equation}

With these notations, we get that
\begin{equation}
    \label{eq:psi_1_integrals}
    \begin{aligned}
    \psi_1^{(j)}(z,w)&= \int_0^1 \dfrac{\tilde{P}^{(\alpha_j,\gamma)}(z,w,y)}{w-y}\dr \mu^{(\alpha_j,\beta_j,\gamma)}(y)\\
    &= \int_0^1 \dfrac{\tilde{P}^{(\alpha_j,\gamma)}(z,w,y)-\tilde{P}^{(\alpha_j,\gamma)}(z,w,w)}{w-y}\dr \mu^{(\alpha_j, \beta_j, \gamma)}(y)\\ &+\int_0^1 \dfrac{\tilde{P}^{(\alpha_j,\gamma)}(z,w,w)}{w-y}\dr \mu^{(\alpha_j,\beta_j,\gamma)}(y)
\end{aligned}
\end{equation}
Denote $\psi_{1,1}^{(j)}$ and $\psi_{1,2}^{(j)}$ the first and second integrals of \eqref{eq:psi_1_integrals}, respectively.

Now focus on the second integral of \eqref{eq:numerator-integrals}
\begin{equation*}
    \begin{aligned}
    \psi^{(j)}_{2}(z,w)&:=\int_{T} \frac{A_{\vec n,\vec k}(x,w)-A_{\vec n,\vec k}(x,y)}{(z-x)(w-y)} \mathrm{d} \mu_j (x,y)\\
    &=  \int_{0}^{1}\dfrac{\int_{0}^{1-x} \frac{A_{\vec n,\vec k}(x,w)-A_{\vec n,\vec k}(x,y)}{w-y} x^{\alpha_j}y^{\beta_j} (1-x-y)^\gamma \,\dr y }{z-x}\, \dr x
    \end{aligned}
\end{equation*}
Using the expansion \eqref{eq:A-coefficients} and proceeding as above, we get
\begin{equation}
    \label{eq:psi_2_integrals}
    \begin{aligned}
    \psi^{(j)}_2(z,w)&=\int_0^1 \dfrac{\tilde P^{(\beta_j,\gamma)}(w,x,x)}{z-x}\dr \mu^{(\beta_j,\alpha_j,\gamma)}(x)\\
    & = \int_0^1 \dfrac{\tilde P^{(\beta_j,\gamma)}(w,x,x)-\tilde P^{(\beta_j,\gamma)}(w,z,z)}{z-x}\dr \mu^{(\beta_j,\alpha_j,\gamma)}(x)\\ &+\int_0^1 \dfrac{\tilde P^{(\beta_j,\gamma)}(w,z,z)}{z-x}\dr \mu^{(\beta_j,\alpha_j,\gamma)}(x)
    \end{aligned}
\end{equation}
Denote the first and second integrals of \eqref{eq:psi_2_integrals} as $\psi_{2,1}^{(j)}$ and $\psi_{2,2}^{(j)}$, respectively.

We first focus on $\psi_{1,2}^{(j)}$ and $\psi_{2,2}^{(j)}$, the second integrals in \eqref{eq:psi_1_integrals} and \eqref{eq:psi_2_integrals}. We will show that they can be expressed in terms of hypergeometric functions. First, we have that

\begin{equation*}
\begin{split}
\psi_{1,2}^{(j)}(z,w)&=\int_0^1 \frac{\tilde P^{(\alpha_j,\gamma)}(z,w,w) }{w-y}\, \dr \mu^{(\alpha_j,\beta_j,\gamma)}(y)\\ & = \Gamma(\gamma+1) \tilde P^{(\alpha_j,\gamma)}(z,w,w) \int_0^1 \frac{1 }{w-y} y^{\beta_j}(1-y)^{\alpha_j+\gamma+1} \, \dr y\\
&= \Gamma(\gamma+1) \tilde P^{(\alpha_j,\gamma)}(z,w,w) \sum_{k=0}^{\infty} \frac{\int_0^1 y^{\beta_j+k}(1-y)^{\alpha_j+\gamma+1} \, \dr y}{w^{k+1}}\\ &=\Gamma(\gamma+1) \tilde P^{(\alpha_j,\gamma)}(z,w,w) \sum_{k=0}^{\infty} 
\frac{\frac{ \Gamma(\beta_j+k+1)\Gamma(\alpha_j+\gamma+2) }{\Gamma(\alpha_j+\beta_j+\gamma+k+3)}}{w^{k+1}}\\
&= \Gamma(\gamma+1)\Gamma(\alpha_j+\gamma+2) \tilde P^{(\alpha_j,\gamma)}(z,w,w)\sum_{k=0}^{\infty}\frac{\Gamma(\beta_j+k+1)}{\Gamma(\alpha_j+\beta_j+\gamma+k+3)w^{k+1}},
\end{split}
\end{equation*}
for sufficiently large $w$. Recall the hypergeometric function
$$
{_2}F_1\left( \begin{matrix} a,\, b \\ c \end{matrix} \,;\, z \right) = \sum_{k=0}^{\infty}\dfrac{(a)_k (b)_k}{(c)_k}\dfrac{z^k}{k!} = \sum_{k=0}^{\infty}\dfrac{\Gamma(a+k)\Gamma(b+k)\Gamma(c)z^k}{\Gamma(a)\Gamma(b)\Gamma(c+k)k!}.
$$
Defining
\begin{equation}
    \label{eq:phi_hyp}
    \phi^{(\alpha,\beta,\gamma)}_{\mathrm{hyp}}(z,w):= \dfrac{\Gamma(\alpha+\gamma+2)\Gamma(\beta+1)\Gamma(\gamma+1)}{\Gamma(\alpha+\beta+\gamma+3) w} \tilde P^{(\alpha,\gamma)}(z,w,w)\, {_2}F_1\left( \begin{matrix} \beta+1,1 \\ \alpha+\beta+\gamma+3 \end{matrix} \,;\, \dfrac 1 w \right),
\end{equation}
the final expression of $\psi_{1,2}^{(j)}$ is
\begin{equation}
\label{eq:psi_12}
    \psi_{1,2}^{(j)}(z,w) = \int_0^1 \frac{\tilde P^{(\alpha_j,\gamma)}(z,w,w) }{w-y}\, \dr \mu^{(\alpha_j,\beta_j,\gamma)}(y)  = \phi^{(\alpha_j,\beta_j,\gamma)}_{\mathrm{hyp}}(z,w).
\end{equation}

Now, we observe that the integral $\psi_{2,2}^{(j)}$ is the same as $\psi_{1,2}^{(j)}$ but with the roles of $z$ and $w$ interchanged, and with the parameters $\alpha_j$ and $\beta_j$ swapped. Therefore, we have
\begin{equation}
\label{eq:psi_22}
\psi_{2,2}^{(j)}(z,w) = \int_0^1 \frac{\tilde P^{(\beta_j,\gamma)}(w,z,z) }{z-x}\, \dr \mu^{(\beta_j,\alpha_j,\gamma)}(x)  = \phi^{(\beta_j,\alpha_j,\gamma)}_{\mathrm{hyp}}(w,z).
\end{equation}

Now, we will focus on $\psi_{1,1}^{(j)}$ and $\psi_{2,1}^{(j)}$, the first integrals in \eqref{eq:psi_1_integrals} and \eqref{eq:psi_2_integrals}. We will show that they are both polynomials.

On the one hand, we have
$$
\psi_{1,1}^{(j)}(z,w)=\int_0^1 \dfrac{\tilde{P}^{(\alpha_j,\gamma)}(z,w,y)-\tilde{P}^{(\alpha_j,\gamma)}(z,w,w)}{w-y}\dr \mu^{(\alpha_j,\beta_j,\gamma)}(y)
$$

See \eqref{eq:numerator-1} and observe that $\tilde P^{(\alpha,\gamma)} (z,w,y)$ is a polynomial of degree $n_1+n_2-1$ in three variables $z,w,y$. In order to simplify some notations, we denote $\tilde P^{(\alpha,\gamma)} (z,w,y)$ and its coefficients as
\begin{equation}
    \label{eq:pol-P-tilde}
    \tilde P^{(\alpha,\gamma)} (z,w,y) = \sum_{p=0}^{n_1+n_2-1}\sum_{i=0}^p\sum_{h=0}^{p-i}\tilde a_{p-i-h,i,h}^{(\alpha,\gamma)}z^{p-i-h}w^i y^h.
\end{equation}

By this expansion, the expression of the measure $\mu^{(\alpha_j,\beta_j,\gamma)}$ in \eqref{eq:numerator-1}, and \eqref{eq:polynomial-division}, we get
$$
\psi_{1,1}^{(j)}(z,w)=-\Gamma(\gamma+1)\sum_{p=0}^{n_1+n_2-1}\sum_{i=0}^p\sum_{h=1}^{p-i}\sum_{k=0}^{h-1}\tilde a_{p-i-h,i,h}^{(\alpha_j,\gamma)}z^{p-i-h}w^{i+h-k-1}\int_0^1 y^{k+\beta_j}(1-y)^{\alpha_j+\gamma+1}\dr y,
$$
so that

\begin{multline}
 \label{eq:psi_11}
\psi_{1,1}^{(j)}(z,w)= -\Gamma(\gamma+1)\Gamma(\alpha_j+\gamma+2) \\ \times \sum_{p=0}^{n_1+n_2-1}\sum_{i=0}^p\sum_{h=1}^{p-i}\sum_{k=0}^{h-1}\dfrac{\Gamma(\beta_j+k+1)}{\Gamma(\alpha_j+\beta_j+\gamma+k+3)}\tilde a_{p-i-h,i,h}^{(\alpha_j,\gamma)}z^{p-i-h}w^{i+h-k-1},
\end{multline}
which is also a polynomial.

On the other hand, the first integral of $\psi^{(j)}_2$
$$
\psi_{2,1}^{(j)}(z,w)=\int_0^1 \dfrac{\tilde P^{(\beta_j,\gamma)}(w,x,x)-\tilde P^{(\beta_j,\gamma)}(w,z,z)}{z-x}\dr \mu^{(\beta_j,\alpha_j,\gamma)}(x),
$$
which is slightly different from the previous one. Using \eqref{eq:pol-P-tilde}, the expression of $\mu^{(\beta_j,\alpha_j,\gamma)}$ and \eqref{eq:polynomial-division}, it might be written as
$$
\psi_{2,1}^{(j)}(z,w)=-\Gamma(\gamma+1)\sum_{p=0}^{n_1+n_2-1}\sum_{i=0}^p\sum_{h=1}^{p-i}\sum_{k=0}^{h-1}\tilde a_{p-i-h,i,h}^{(\beta_j,\gamma)}w^{p-i-h}z^{i+h-k-1}\int_0^1 x^{k+\alpha_j}(1-x)^{\beta_j+\gamma+1}\dr x,
$$
thus,

\begin{multline}
\label{eq:psi_21}
   \psi_{2,1}^{(j)}(z,w)=-\Gamma(\gamma+1)\Gamma(\beta_j+\gamma+2)\\ \times\sum_{p=0}^{n_1+n_2-1}\sum_{i=0}^p\sum_{h=1}^{p-i}\sum_{k=0}^{i+h-1}\dfrac{\Gamma(\alpha_j+k+1)}{\Gamma(\alpha_j+\beta_j+\gamma+k+3)}\tilde a_{p-i-h,i,h}^{(\beta_j,\gamma)}w^{p-i-h}z^{i+h-k-1}.
\end{multline}

Observe that \eqref{eq:psi_11} and \eqref{eq:psi_21} are almost reciprocal, up to a difference concerning the last sum, whose upper limit is $h-1$ in the case of \eqref{eq:psi_11}, while it is $i+h-1$ in \eqref{eq:psi_21}.

In summary, we have proved the following result about $\Phi^{(j)}(z,w)$.

\begin{proposition}\label{prop:numerator}
    Let $A_{\vec n,\vec k}(z,w)$ be a solution of the problem \eqref{eq:approximation}. Then the term $\Phi^{(j)}(z,w)$ defined in \eqref{eq:integral-numerator} has the expression

    $$
    \Phi^{(j)}_{\vec n,\vec k}(z,w)=\Phi^{(j)}_{\mathrm{pol}}(z,w)+\Phi^{(j)}_{\mathrm{hyp}}(z,w)
    $$
    where
    \begin{itemize}
        \item $
    \Phi^{(j)}_{\mathrm{pol}}(z,w)=\psi_{1,1}^{(j)}(z,w)+\psi_{2,1}^{(j)}(z,w)
    $ with $\psi_{1,1}^{(j)}$ and $\psi_{2,1}^{(j)}$ given in \eqref{eq:psi_11} and \eqref{eq:psi_21} respectively is a polynomial, and 
    \item  $\Phi^{(j)}_{\mathrm{hyp}}(z,w)=\psi_{1,2}^{(j)}(z,w)+\psi_{2,2}^{(j)}(z,w)$, where $\psi_{1,2}^{(j)}$ and $\psi_{2,2}^{(j)}$ are given in \eqref{eq:psi_12} and \eqref{eq:psi_22} respectively, being both products of hypergeometric functions by a polynomial.   
    \end{itemize}
\end{proposition}

\begin{remark} 
    Instead of expressing $\psi_{1,2}^{(j)}(z,w)$ and $\psi_{2,2}^{(j)}(z,w)$ in terms of the hypergeometric function ${}_2F_1$, one may approximate the corresponding Stieltjes transforms
    \[
    \int_0^1 \frac{\mathrm{d}\mu^{(\alpha_j,\beta_j,\gamma)}(y)}{w-y}
    \qquad\text{and}\qquad
    \int_0^1 \frac{\mathrm{d}\mu^{(\beta_j,\alpha_j,\gamma)}(x)}{z-x}
    \]
    by means of univariate Pad\'e approximants. Taking into account the explicit expressions of the measure $\mu^{(\alpha,\beta,\gamma)}$ in \eqref{eq:numerator-1} and following the methodology described in \citep{VA06}, it follows that the denominators of these Pad\'e approximants correspond, up to a linear change of variables, to Jacobi polynomials.
\end{remark}

Combining Propositions~\ref{prop:denominator} and \ref{prop:numerator}, we conclude that \eqref{eq:approximation} holds with
\[
A_{\vec n,\vec k}(z,w)=
U_{\vec n,\vec k}^{(\vec\alpha,\vec\beta,\gamma)}(z,w),
\]
where the polynomial
$U_{\vec n,\vec k}^{(\vec\alpha,\vec\beta,\gamma)}$
is defined in \eqref{eq:pol-definition}. Therefore, the quotient
\begin{equation}
    \label{eq:approx-R-j}
    R_{\vec n,\vec k}^{(j)}(z,w)=\frac{\Phi_{\vec n,\vec k}^{(j)}(z,w)}
{U_{\vec n,\vec k}^{(\vec\alpha,\vec\beta,\gamma)}(z,w)}
\end{equation}
satisfies the proposed Hermite--Padé-type approximation conditions for
$E_j(z,w)$.

To conclude this section, note that we have constructed approximations for two distinct functions, $E_1$ and $E_2$, sharing a common denominator: the bivariate multiple orthogonal polynomial $U_{\vec n, \vec k}^{(\vec \alpha, \vec \beta, \gamma)}(z,w)$. This structural unity constitutes the primary objective of Hermite--Pad\'e approximation schemes \citep{VA06}. In the subsequent section, we present numerical experiments validating these approximations for $E_1$ and $E_2$, as defined in \eqref{eq:Ej-integral}, using specific parameter configurations.

\section{Numerical Simulations}\label{sec:simulations}

In this section, we investigate the numerical performance of the Hermite--Padé-type approximants introduced in Section~\ref{sec:HP}. Specifically, we select concrete parameter values, construct the approximations for $E_1$ and $E_2$ following the procedure detailed in Section~\ref{sec:HP}, and assess their accuracy in approximating the target functions. Throughout this numerical study, we fix the parameters as $\alpha_1=0$, $\alpha_2=\frac{3}{2}$, $\beta_1=\frac{1}{2}$, $\beta_2=\frac{4}{3}$, and $\gamma=0$, so that the target functions to be approximated become
\begin{align*}
    E_1(z,w)&=\int_T\frac{\sqrt{y}}{(w-y) (z-x)}\mathrm{d} x\, \mathrm{d} y, & E_2(z,w)&=\int_T \frac{x^{3/2} y^{4/3}}{(w-y) (z-x)}\mathrm{d} x\, \mathrm{d} y.
\end{align*}

We consider multi-indices of the form $\vec k = (k,k)$ and $\vec n =(2k,2k)$, which reduces the asymptotic order conditions in \eqref{eq:approximation} to
$$
U_{\vec n,\vec k}^{(\vec\alpha,\vec\beta,\gamma)}(z,w) E_j(z,w) = \Phi_{\vec n,\vec k}^{(j)}(z,w) + \mathcal O (z^{-k-1} w^{-k-1}),\quad z,w\longrightarrow\infty, \quad j=1,2.
$$
For each pair of multi-indices $\vec k$ and $\vec n$, the functions \eqref{eq:approx-R-j} are explicitly computed.

As expected, as $k$ increases, both the degree of the denominator $U_{\vec n,\vec k}^{(\vec\alpha,\vec\beta,\gamma)}$ and the computational complexity of the approximation increase concurrently. Following the construction of these approximants, we evaluate their numerical accuracy.

We evaluate the performance of the approximation using three distinct validation sets, $\mathcal{C}_1$, $\mathcal{C}_2$, and $\mathcal{C}=\mathcal{C}_1\cup\mathcal{C}_2$, where
\begin{itemize}
    \item $\mathcal{C}_1$ consists of $N_1$ points randomly distributed in the region $([1,20]\times[1,20])\backslash ([10,20]\times[10,20])$,
    \item $\mathcal{C}_2$ consists of $N_2$ points randomly distributed in the region $[10,20]\times[10,20]$, and
    \item $\mathcal{C}=\mathcal{C}_1\cup\mathcal{C}_2$, represents the disjoint union of $\mathcal{C}_1$ and $\mathcal{C}_2$.

\end{itemize}

Figure~\ref{fig:test-points} illustrates the spatial distribution of $\mathcal{C}_1$ and $\mathcal{C}_2$ for $N_1=300$ and $N_2 = 100$. These specific domains are selected because the accuracy of the approximant $R_{\vec n,\vec k}^{(j)}$ inherently improves as $z,w\rightarrow\infty$. 

\begin{figure}
    \centering
    \includegraphics[width=0.4\linewidth]{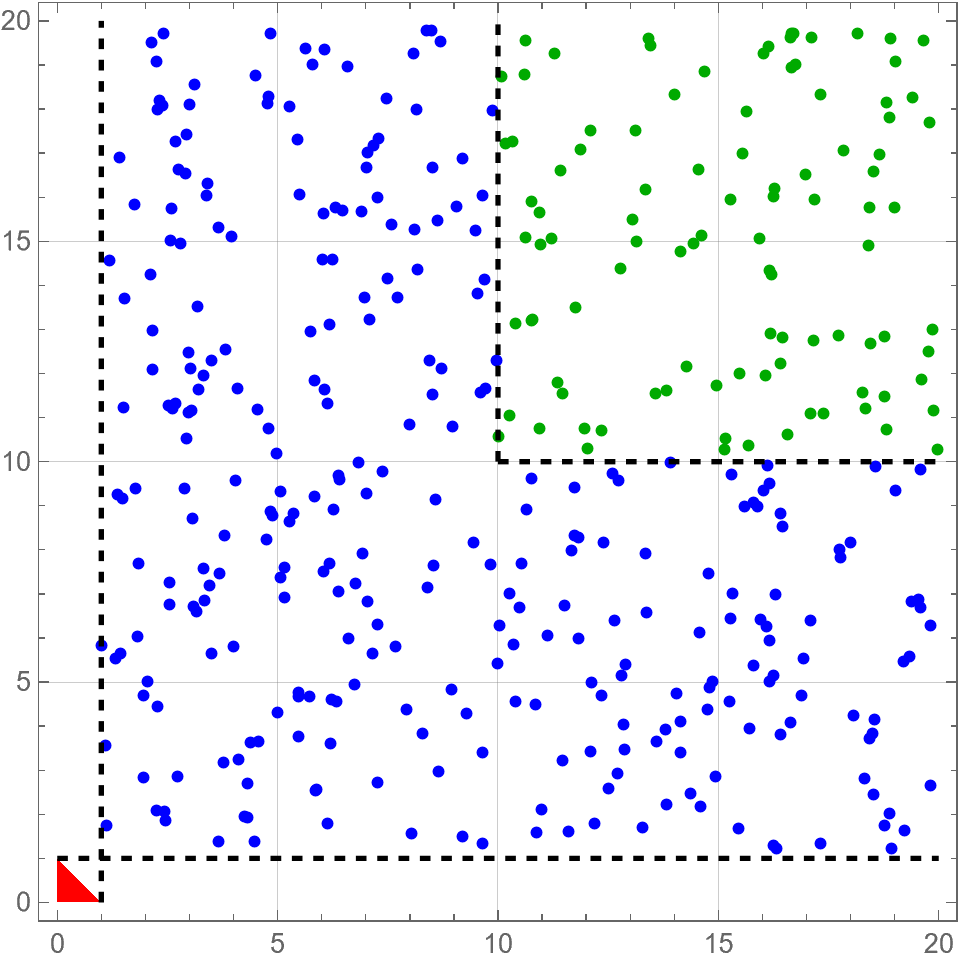}
    \caption{Spatial distribution of the test datasets. The blue markers represent elements of $\mathcal{C}_1$, while the green markers designate elements of $\mathcal{C}_2$.}
    \label{fig:test-points}
\end{figure}

For each configuration of $\vec n$ and $\vec k$, we assess the performance across the three test datasets. Specifically, for every target domain $\mathcal{T} \in \{\mathcal{C}_1, \mathcal{C}_2, \mathcal{C}\}$ and each index $j = 1,2$, we define the following error metrics.

\begin{itemize}
    \item \textbf{Mean Squared Error (MSE):} The mean of the squared local errors, defined as
    $$
    \mathrm{MSE}_j = \frac{1}{\#\mathcal{T}} \sum_{(x,y) \in \mathcal{T}} \left|E_j(x,y) - R_{\vec n,\vec k}^{(j)}(x,y)\right|^2. 
    $$
    \item \textbf{Maximum Absolute Error (MaxAE):} The maximum absolute local error, defined as
    $$
    \mathrm{MaxAE}_j = \max_{(x,y) \in \mathcal{T}} \left\{ \left| E_j(x,y) - R_{\vec n,\vec k}^{(j)}(x,y) \right| \right\}.
    $$
    \item \textbf{Mean Relative Error (MRE):} The average of the relative local errors, defined as
    $$
    \mathrm{MRE}_j = \frac{1}{\#\mathcal{T}} \sum_{(x,y) \in \mathcal{T}} \frac{\left| E_j(x,y) - R_{\vec n,\vec k}^{(j)}(x,y) \right|}{\left| E_j(x,y) \right|}.
    $$
    \item \textbf{Maximum Relative Error (MaxRE):} The maximum relative local error, defined as
    $$
    \mathrm{MaxRE}_j = \max_{(x,y) \in \mathcal{T}} \left\{ \frac{\left| E_j(x,y) - R_{\vec n,\vec k}^{(j)}(x,y) \right|}{\left| E_j(x,y) \right|} \right\}.
    $$

\end{itemize}

First, we choose $n_1 = n_2 = 2$ and $k_1=k_2=1$, so that the polynomial \eqref{eq:pol-definition} has degree $n_1+n_2=4$. Indeed, we have
\begin{multline}
    U_{\vec n,\vec k}^{\vec\alpha,\vec \beta,\gamma}(z,w)=U^{((0,3/2),(1/2,4/3),0)}_{(2,2),(1,1)}(z,w)=\frac{1045}{12}w^4 +672 w^3 z+\frac{2457 }{2}w^2 z^2+\frac{2200 }{3}w z^3+\frac{455 }{4}z^4\\-240 w^3-1274 w^2 z-1350 w z^2-308 z^3+\frac{455 }{2}w^2+700 w z+\frac{567 }{2}z^2-\frac{250 }{3}w-98 z+\frac{35}{4}.
\end{multline}

With these values, we use Proposition~\ref{prop:numerator} to compute $\Phi^{(1)}(z,w)$ and $\Phi^{(2)}(z,w)$, and \eqref{eq:approximation} reads: 
\begin{equation*}
    \begin{array}{c}
         U_{\vec n,\vec k}^{\vec\alpha,\vec \beta,\gamma}(z,w) E_1(z,w) = \Phi^{(1)}(z,w) + \mathcal{O}(z^{-2}w^{-2}), \\[5pt]
         
         U_{\vec n,\vec k}^{\vec\alpha,\vec \beta,\gamma}(z,w) E_2(z,w) = \Phi^{(2)}(z,w) + \mathcal{O}(z^{-2}w^{-2}). \\
    \end{array}
\end{equation*}

The results of the tests performed over the sets $\mathcal C_1,\mathcal C_2$ and $\mathcal C$ are displayed on Tables~\ref{tab:E1_2} and \ref{tab:E2_2}. Moreover, in this case we plot the functions $E_j(z,w)$, their computed approximation $R_{\vec n,\vec k}^{(j)}(z,w)$ and the absolute error $|E_j(z,w)-R_{\vec n,\vec k}^{(j)}(z,w)|$, as well as the relative error $\frac{|E_j(z,w)-R_{\vec n,\vec k}^{(j)}(z,w)|}{E_j(z,w)}$, see Figure~\ref{fig:simulations}.

\begin{table}[h]
\begin{tabular}{ccccc}
\hline
\begin{tabular}[c]{@{}c@{}}$\vec n =(2,2)$, \\ $\vec k=(1,1)$\end{tabular} & \textbf{MSE}$_1$ & \textbf{MaxAE}$_1$ & \textbf{MRE}$_1$ & \textbf{MaxRE}$_1$ \\ \hline\hline
$\mathcal C_1$                                                             & 2.60586e-6       & 0.01521          & 0.03599        & 0.26269           \\ \hline
$\mathcal C_2$                                                             & 3.7973e-10      & 4.5733e-5       & 0.01188        & 0.03197          \\ \hline
$\mathcal C$                                                               & 1.95449e-6      & 0.01521           & 0.02996         & 0.26268            \\ \hline
\end{tabular}
\caption{Error metrics on the approximation of $E_1$ using $\vec n =(2,2), \vec k = (1,1)$.}
\label{tab:E1_2}
\end{table}

\begin{table}[h]
\begin{tabular}{ccccc}
\hline
\begin{tabular}[c]{@{}c@{}}$\vec n =(2,2)$, \\ $\vec k=(1,1)$\end{tabular} & \textbf{MSE}$_2$ & \textbf{MaxAE}$_2$ & \textbf{MRE}$_2$ & \textbf{MaxRE}$_2$ \\ \hline\hline
$\mathcal C_1$                                                             & 7.15416e-8       & 0.00309         & 0.06765        & 0.47147           \\ \hline
$\mathcal C_2$                                                             & 5.6234e-12      & 5.5491e-6         & 0.02179       & 0.05779          \\ \hline
$\mathcal C$                                                               & 5.36576e-8       & 0.00309         & 0.05618        & 0.47147           \\ \hline
\end{tabular}
\caption{Error metrics on the approximation of $E_2$ using $\vec n =(2,2), \vec k = (1,1)$.}
\label{tab:E2_2}
\end{table}

\begin{figure}[ht]
    \centering\begin{tabular}{cc}
    \includegraphics[height=6cm]{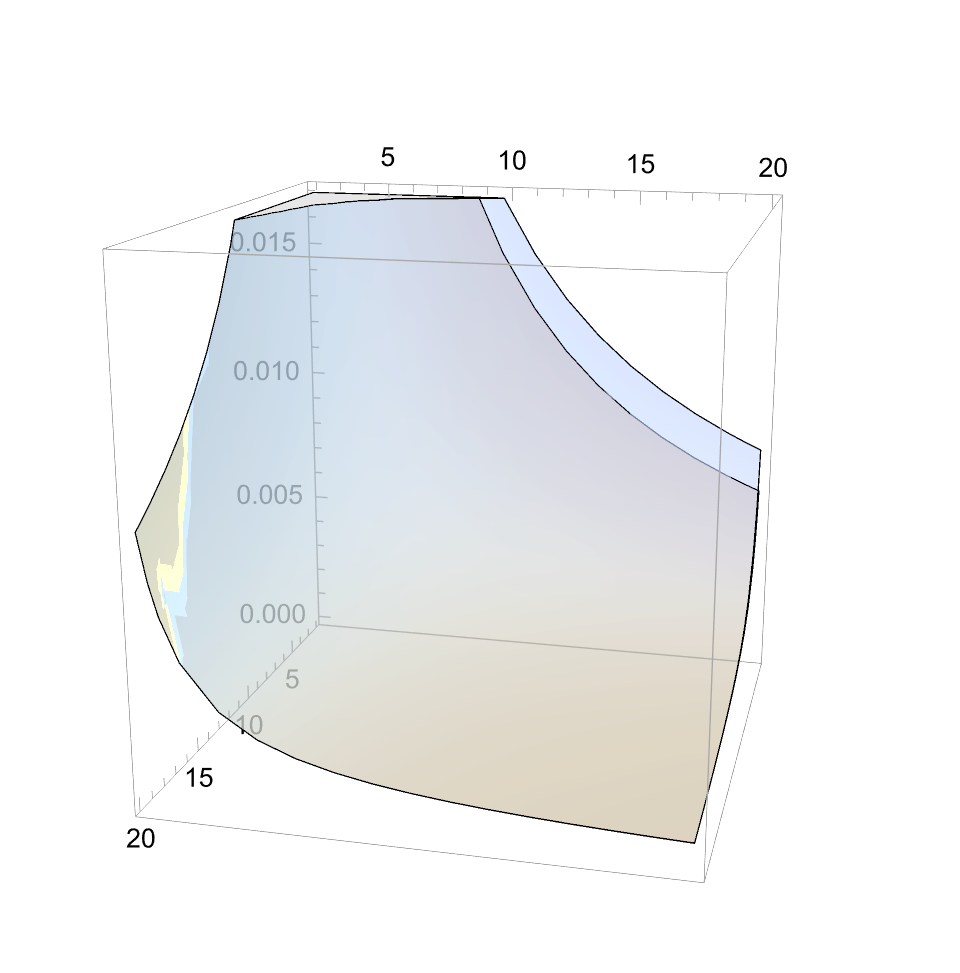}     & 
    \includegraphics[height=6cm]{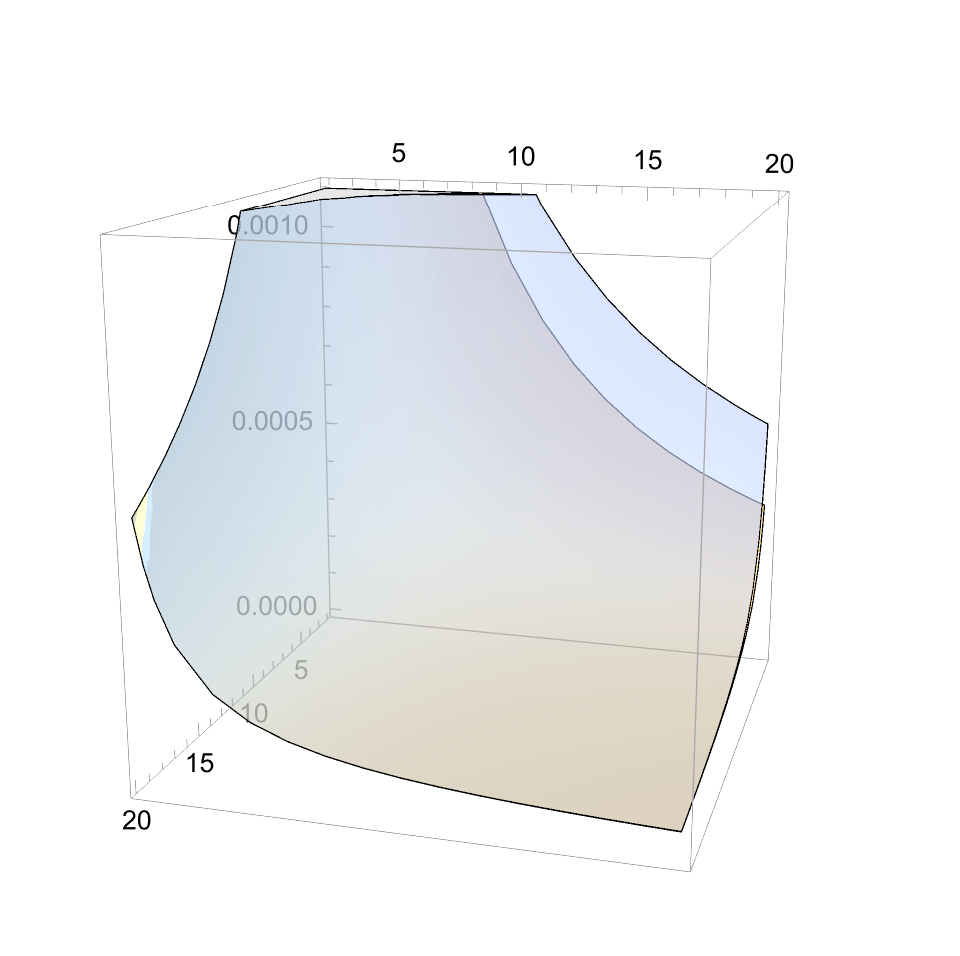}  \\
    (a) $E_1$ and $R_{\vec n,\vec k}^{(1)}$     &  (b) $E_2$ and $R_{\vec n,\vec k}^{(2)}$ \\
    
    \includegraphics[width=0.45\textwidth]{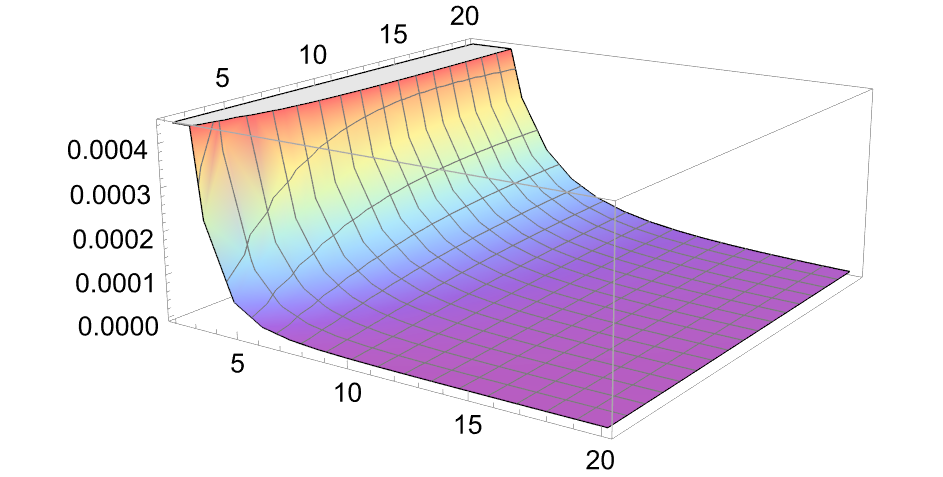}     & 
    \includegraphics[width=0.45\textwidth]{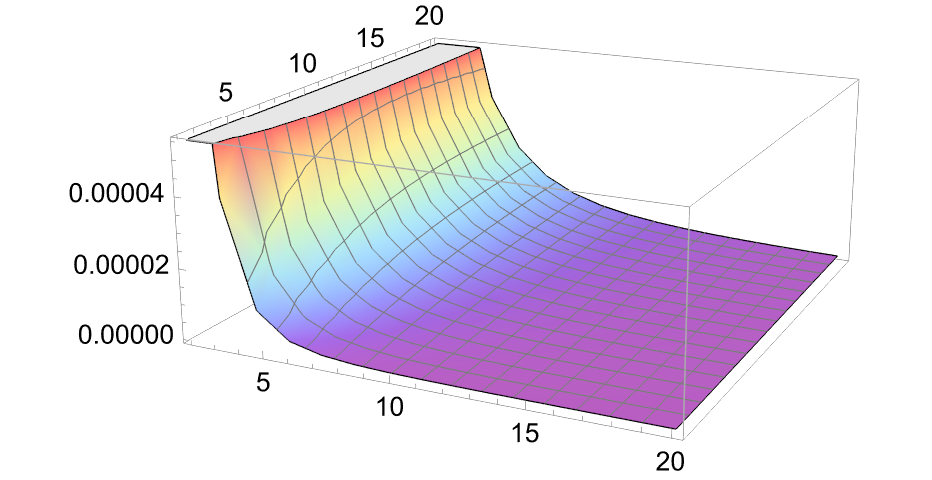}  \\
    (c)  $|E_1-R_{\vec n,\vec k}^{(1)}|$   &  (d) $|E_2-R_{\vec n,\vec k}^{(2)}|$ \\

    \includegraphics[width=0.45\textwidth]{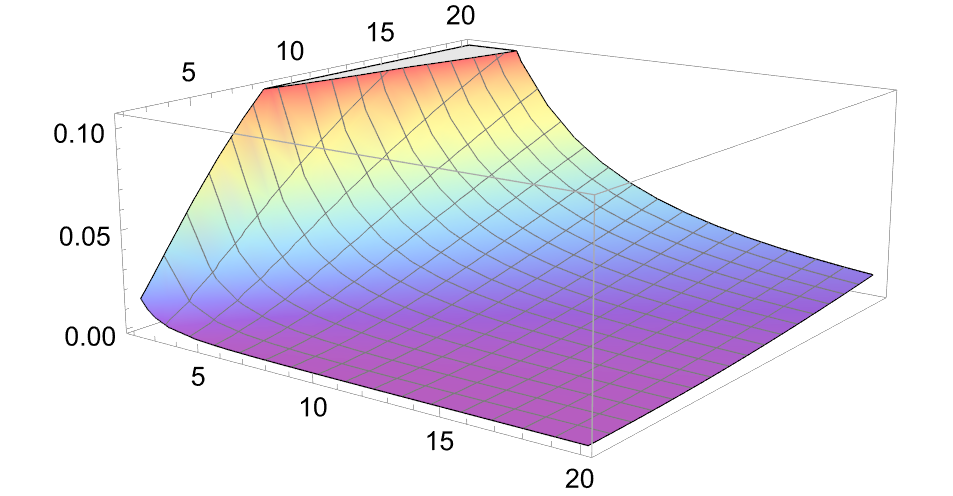}     & 
    \includegraphics[width=0.45\textwidth]{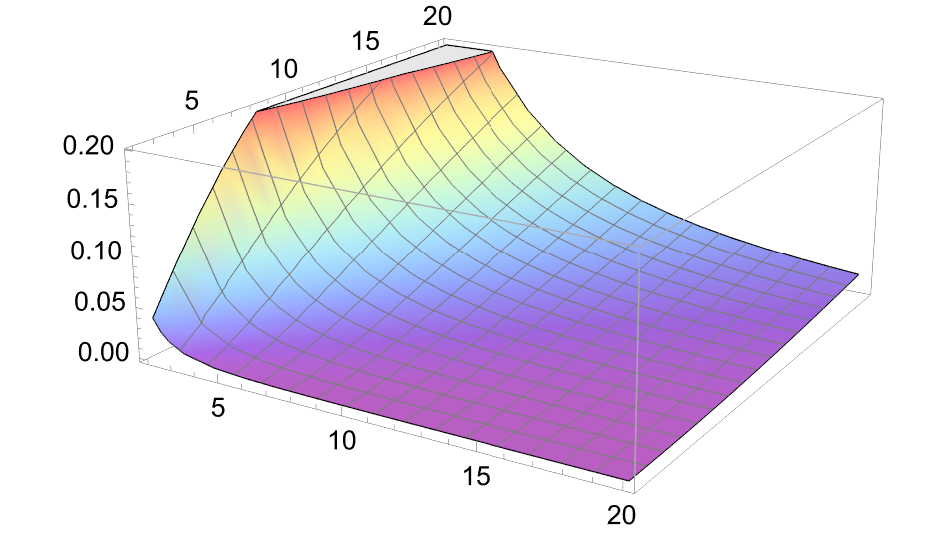}  \\
    (e)  $\frac{|E_1-R_{\vec n,\vec k}^{(1)}|}{|E_1|}$   &  (f) $\frac{|E_2-R_{\vec n,\vec k}^{(2)}|}{|E_2|}$ \\
         
    \end{tabular}
    \caption{First row: plot of $E_j$ (blue) and the approximation $R_{\vec n,\vec k}^{(j)}$ (yellow). Second row: plot of the absolute error $|E_j-R_{\vec n,\vec k}^{(j)}|$. Third row: plot of the relative error $\frac{|E_j-R_{\vec n,\vec k}^{(j)}|}{|E_j|}$.}
    \label{fig:simulations}
\end{figure}

Observing the plots in Figure~\ref{fig:simulations} and Tables~\ref{tab:E1_2} and \ref{tab:E2_2}, we notice that the errors on $\mathcal C_2$ are always smaller than those on $\mathcal C_1$. This behavior is consistent with the asymptotic nature of the approximation, since the remainder term is controlled as $z,w\to\infty$. Consequently, the approximation becomes more accurate on $\mathcal C_2$, whose points are located farther from the origin (see Figure~\ref{fig:test-points}). Furthermore, Figures~\ref{fig:simulations}(c)--(f) show that both the absolute and relative errors decrease as $(z,w)$ moves away from the origin.

Finally, we set $k=2$, so that $\vec n =(4,4)$, $\vec k =(2,2)$. In this case $U_{\vec n,\vec k}^{\vec\alpha,\vec \beta,\gamma}=U_{(4,4),(2,2)}^{((0,3/2),(1/2,4/3),0)}$, the common denominator of $R_{\vec n,\vec k}^{(j)}$ is a polynomial of degree $n_1+n_2 = 8$, whose expression has been omitted for brevity. Then, in this case the function $\Phi^{(j)}$ and the polynomial $U_{\vec n,\vec k}^{\vec\alpha,\vec \beta,\gamma}$ satisfy
\begin{equation*}
    \begin{array}{c}
         U_{\vec n,\vec k}^{\vec\alpha,\vec \beta,\gamma}(z,w) E_1(z,w) = \Phi^{(1)}(z,w) + \mathcal{O}(z^{-3}w^{-3}), \\[5pt]
         
         U_{\vec n,\vec k}^{\vec\alpha,\vec \beta,\gamma}(z,w) E_2(z,w) = \Phi^{(2)}(z,w) + \mathcal{O}(z^{-3}w^{-3}). \\
    \end{array}
\end{equation*}

The results of the tests are shown in Tables~\ref{tab:E1_4} and \ref{tab:E2_4}. Again, as expected, the results on $\mathcal C_2$ are the lowest ones.

\begin{table}[h]
\begin{tabular}{ccccc}
\hline
\begin{tabular}[c]{@{}c@{}}$\vec n =(4,4)$, \\ $\vec k=(2,2)$\end{tabular} & \textbf{MSE}$_1$ & \textbf{MaxAE}$_1$ & \textbf{MRE}$_1$ & \textbf{MaxRE}$_1$ \\ \hline\hline 
$\mathcal C_1$                                                             & 3.70322e-7      & 0.00412       & 0.00841        & 0.17016          \\ \hline
$\mathcal C_2$                                                             & 4.0023e-13       & 2.10954e-6          & 0.00029        & 0.00155           \\ \hline

$\mathcal C$                                                               & 2.77741e-7      & 0.00412           & 0.00638         & 0.17016            \\ \hline
\end{tabular}
\caption{Error metrics on the approximation of $E_1$ using $\vec n =(4,4), \vec k = (2,2)$.}
\label{tab:E1_4}
\end{table}

\begin{table}[h]
\begin{tabular}{ccccc}
\hline
\begin{tabular}[c]{@{}c@{}}$\vec n =(4,4)$, \\ $\vec k=(2,2)$\end{tabular} & \textbf{MSE}$_2$ & \textbf{MaxAE}$_2$ & \textbf{MRE}$_2$ & \textbf{MaxRE}$_2$ \\ \hline\hline
$\mathcal C_1$                                                             & 1.08238e-8       & 0.00082         & 0.01782        & 0.33366           \\ \hline
$\mathcal C_2$                                                             & 7.5652e-15      & 2.85235e-7         & 0.00060       & 0.00313          \\ \hline
$\mathcal C$                                                               & 8.11786e-9       & 0.00082         & 0.01352        & 0.33366           \\ \hline
\end{tabular}
\caption{Error metrics on the approximation of $E_2$ using $\vec n =(4,4), \vec k = (2,2)$.}
\label{tab:E2_4}
\end{table}

A comparison between Table~\ref{tab:E1_2} and Table~\ref{tab:E1_4}, as well as Table~\ref{tab:E2_2} and Table~\ref{tab:E2_4}, highlights the performance variations when approximating $E_1$ and $E_2$, respectively.  Notably, the errors reported in Tables~\ref{tab:E1_4} and \ref{tab:E2_4} are lower than their counterparts, demonstrating that the accuracy of the approximant configured with $\vec n=(4,4)$ and $\vec k = (2,2)$ is significantly higher. This behavior aligns with the theoretical framework, as the remainder term in \eqref{eq:approximation} reduces to $\mathcal{O}(z^{-3}w^{-3})$ instead of $\mathcal{O}(z^{-2}w^{-2})$. These experiments indicate that increasing the degree of the denominator polynomial $U_{\vec n,\vec k}^{(\vec\alpha,\vec\beta,\gamma)}$ leads to a substantial improvement in accuracy. Conversely, this enhancement in precision entails a higher computational cost. As the multi-indices scale from $\vec n=(2,2)$ to $\vec n=(4,4)$, the algebraic degree of the denominator polynomial $U_{\vec n,\vec k}^{(\vec\alpha,\vec\beta,\gamma)}$ doubles, inducing a substantial increase in both algorithmic complexity and execution time during the symbolic generation and numerical evaluation of the approximants. Consequently, selecting the optimal approximation configuration requires a careful balance between the targeted numerical accuracy and the available computational resources.

\section{Conclusions}\label{sec:conclusions}

We conclude by summarizing the main contributions of this work. We have defined a family of bivariate polynomials satisfying orthogonality conditions with respect to more than one measure. These polynomials are obtained through the composition of commuting Rodrigues operators, resulting in a Rodrigues formula that
\begin{itemize}
    \item extends the univariate Rodrigues formula for multiple orthogonal polynomials \eqref{eq:rodrigues-1-var} by increasing the number of variables; and
    \item extends the bivariate Rodrigues formula for orthogonal polynomials on the triangle \eqref{eq:Rodrigues-2-var} by increasing the number of measures.
\end{itemize}

Regarding the relation between multiple orthogonality and Hermite--Padé approximation, we have extended the classical univariate framework to the bivariate setting. More precisely, we introduced a Hermite--Padé-type approximation problem for the functions $E_j(z,w)$ defined in \eqref{eq:Ej-integral}, and constructed approximants of the form
\[
\frac{\Phi_{\vec n,\vec k}^{(j)}(z,w)}
     {U_{\vec n,\vec k}^{(\vec\alpha,\vec\beta,\gamma)}(z,w)}.
\]
In this construction, the polynomial
$U_{\vec n,\vec k}^{(\vec\alpha,\vec\beta,\gamma)}$
plays a role analogous to that of the univariate multiple orthogonal polynomial
$P_{\vec n}$.
Unlike the univariate case, however, the numerator
$\Phi_{\vec n,\vec k}^{(j)}$
is not a polynomial, but can still be computed explicitly in terms of polynomials and hypergeometric functions.

A further distinction with respect to the univariate theory concerns uniqueness. In the classical setting, whenever $P_{\vec n}$ exists, it is uniquely determined up to a multiplicative constant. In contrast, the bivariate approximation conditions may admit several polynomial solutions. The polynomial family introduced in \eqref{eq:pol-definition} provides one such solution through a Rodrigues-type construction.

Future work includes extending other families of multivariate orthogonal polynomials given in terms of Rodrigues-type formulas to the multiple orthogonal setting. Furthermore, we aim to 
thoroughly investigate potential extensions of the Hermite--Pad\'{e} 
approximation for multivariate multiple orthogonal polynomials, seeking a 
genuine rational approximation.

%\backmatter
\section*{CRediT authorship contribution statement}

All authors contributed equally.

\section*{Acknowledgments}

The work of authors L. Fernández and J. A. Villegas is partially supported by grants PID2023-149117NB-I00, PID2024-155133NB-I00 and CEX
2020-001105-M, all funded by ``Ministerio de Ciencia, Innovación y Universidades'' (MICIU/AEI/10.13039/501100011033 and ERDF/EU), Spain. 

The work of author A. Foulquié-Moreno is partially supported by grant UID/4106/2025 (\url{https://doi.org/\\10.54499\\UID/04106/2025}), funded by ``Fundação para a Ciência e a Tecnologia (FCT)''.

%\nocite{*}
\bibliographystyle{plain}
\bibliography{references}{}

@article{ABVA03,
 ISSN = {00029947},
 URL = {http://www.jstor.org/stable/1194742},
 author = {Aptekarev, A. I. and Branquinho, A. and Van Assche, W.},
 journal = {Transactions of the American Mathematical Society},
 number = {10},
 pages = {3887--3914},
 publisher = {American Mathematical Society},
 title = {Multiple Orthogonal Polynomials for Classical Weights},
 urldate = {2025-10-13},
 volume = {355},
 year = {2003}
}

@article{ACVA01,
    author = {Arves\'u, J. and Coussement, J. and Van Assche, W.},
    year = {2001},
    month = {10},
    pages = {19-45},
    title = {Some Discrete Multiple Orthogonal Polynomials},
    volume = {153},
    journal = {Journal of Computational and Applied Mathematics},
    doi = {10.1016/S0377-0427(02)00597-6}
}

@article{ADL23,
	AUTHOR = {Aptekarev, A. I. and Dyachenko, A. and Lysov, V.},
	TITLE = {On Perfectness of Systems of Weights Satisfying Pearson’s Equation with Nonstandard Parameters},
	JOURNAL = {Axioms},
	VOLUME = {12},
	YEAR = {2023},
	NUMBER = {1},
	ARTICLE-NUMBER = {89},
	URL = {https://www.mdpi.com/2075-1680/12/1/89},
	ISSN = {2075-1680},
	DOI = {10.3390/axioms12010089}
}

@article{AFPP09,
	title = {{A matrix Rodrigues formula for classical orthogonal polynomials in two variables}},
	journal = {Journal of Approximation Theory},
	volume = {157},
	number = {1},
	pages = {32-52},
	year = {2009},
	issn = {0021-9045},
	doi = {https://doi.org/10.1016/j.jat.2008.04.018},
	url = {https://www.sciencedirect.com/science/article/pii/S002190450800155X},
	author = { {\'Alvarez de Morales}, M. and Fern\'andez, L. and P\'erez, T. E. and Pi\~nar, M. A.}
}

@incollection{AFPP08,
    author    = {{\'{A}lvarez de Morales}, M. and Fern\'{a}ndez, L. and P\'{e}rez, T. E. and Pi\~{n}ar, M. A.},
    title     = {A {S}tieltjes function in two variables},
    booktitle = {Approximation Theory {XII}: {S}an {A}ntonio 2007},
    editor    = {Neamtu, M. and Schumaker, L. L.},
    series    = {Mod. Methods Math.},
    pages     = {1--13},
    publisher = {Nashboro Press},
    address   = {Brentwood, TN},
    year      = {2008},
    isbn      = {978-0-9728482-9-9}
}

@book{AK26,
	  title={Fonctions hyperg{\'e}om{\'e}triques et hypersph{\'e}riques: polynomes d'Hermite},
	  author={Appell, P. and De F{\'e}riet, J. K.},
	  year={1926},
	  publisher={Gauthier-villars}
}

@article{Ape79,
     author  = {Ap\'{e}ry, R.},
     title   = {Irrationalit\'{e} de $\zeta(2)$ et $\zeta(3)$},
     journal = {Ast\'{e}risque},
     number  = {61},
     pages   = {11--13},
     year    = {1979},
     note    = {Journ\'{e}es Arithm\'{e}tiques de Luminy},
     url     = {https://www.numdam.org/item/AST_1979__61__11_0/}
}

@article{BCVA05,
    title = {Multiple {Wilson} and {Jacobi--Piñeiro} polynomials},
    journal = {Journal of Approximation Theory},
    volume = {132},
    number = {2},
    pages = {155-181},
    year = {2005},
    issn = {0021-9045},
    doi = {https://doi.org/10.1016/j.jat.2004.12.001},
    url = {https://www.sciencedirect.com/science/article/pii/S0021904504002084},
    author = {Beckermann, B. and Coussement, J. and Van Assche, W.},
    keywords = {Multiple orthogonal polynomials, Hypergeometric functions}
}

@Article{BDFM25,
author={Branquinho, A.
and D{\'i}az, J. E. F.
and {Foulqui{\'e}-Moreno}, A.
and Ma{\~{n}}as, M.},
title={{Uniform multiple orthogonal polynomials and Markov chains}},
journal={Numerical Algorithms},
year={2025},
month={Aug},
day={05},
issn={1572-9265},
doi={10.1007/s11075-025-02195-6},
url={https://doi.org/10.1007/s11075-025-02195-6}
}

@Article{BDFMAF23,
author={Branquinho, A.
and D{\'i}az, J. E. F.
and {Foulqui{\'e}-Moreno}, A.
and Ma{\~{n}}as, M.
and {\'A}lvarez-Fern{\'a}ndez, C.},
title={{Jacobi--Pi{\~{n}}eiro Markov chains}},
journal={Revista de la Real Academia de Ciencias Exactas, F{\'i}sicas y Naturales. Serie A. Matem{\'a}ticas},
year={2023},
month={Oct},
day={25},
volume={118},
number={1},
pages={15},
issn={1579-1505},
doi={10.1007/s13398-023-01510-x},
url={https://doi.org/10.1007/s13398-023-01510-x}
}

@article {BK04,
    AUTHOR = {Bleher, P. M. and Kuijlaars, A. B. J.},
     TITLE = {Random matrices with external source and multiple orthogonal polynomials},
   JOURNAL = {Int. Math. Res. Not.},
  FJOURNAL = {International Mathematics Research Notices},
      YEAR = {2004},
    NUMBER = {3},
     PAGES = {109--129},
      ISSN = {1073-7928,1687-0247},
   MRCLASS = {82B41 (28C10 33C90 60B15 82B44)},
  MRNUMBER = {2038771},
MRREVIEWER = {Dimitri\ Petritis},
       DOI = {10.1155/S1073792804132194},
       URL = {https://doi.org/10.1155/S1073792804132194},
}

@article{BK05,
	     author = {Bleher, P. M. and Kuijlaars, A. B. J.},
	     title = {Integral representations for multiple {Hermite} and multiple {Laguerre} polynomials},
	     journal = {Annales de l'Institut Fourier},
	     pages = {2001--2014},
	     year = {2005},
	     publisher = {Association des Annales de l{\textquoteright}institut Fourier},
	     volume = {55},
	     number = {6},
	     doi = {10.5802/aif.2148},
	     zbl = {1084.33008},
	     mrnumber = {2187942},
	     language = {en},
	     url = {https://aif.centre-mersenne.org/articles/10.5802/aif.2148/}
}

@article{BR01,
	author = {Ball, K. and Rivoal, T.},
	year = {2001},
	month = {10},
	pages = {193-207},
	title = {Irrationalité d'une infinité de valeurs de la fonction z\^eta aux entiers impairs},
	volume = {146},
	journal = {Inventiones Mathematicae},
	doi = {10.1007/s002220100168}
}

@article{CB00,
	  title     = {{Multivariate orthogonal polynomials, homogeneous Pad{\'e} approximants and Gaussian cubature}},
	  author    = {Benouahmane, B. and Cuyt, A. A. M.},
	  journal   = {Numerical Algorithms},
	  volume    = {24},
	  number    = {1},
	  pages     = {1--15},
	  year      = {2000},
	  publisher = {Springer}
}

@article{CDO15,
    title = {{Multiple orthogonal polynomials on the unit circle. Normality and recurrence relations}},
    journal = {Journal of Computational and Applied Mathematics},
    volume = {284},
    pages = {115-132},
    year = {2015},
    note = {OrthoQuad 2014},
    issn = {0377-0427},
    doi = {https://doi.org/10.1016/j.cam.2014.11.004},
    url = {https://www.sciencedirect.com/science/article/pii/S0377042714004750},
    author = {{Cruz-Barroso}, R. and {D\'iaz-Mendoza}, C. and Orive, R.}
}

@book{Chi78,
	author = {Chihara, T. S.},
	address = {New York},
	booktitle = {An introduction to orthogonal polynomials},
	isbn = {0677041500},
	keywords = {Orthogonal polynomials ; 31.35 harmonic analysis},
	language = {eng},
	lccn = {76041598},
	publisher = {Gordon and Breach},
	series = {Mathematics and its applications (13)},
	title = {An introduction to orthogonal polynomials },
	url = {http://bvbm2.bib-bvb.de:8993/F?func=service&doc_library=BVB01&doc_number=001480531&line_number=0001&func_code=DB_RECORDS&service_type=MEDIA},
	year = {1978},
}

@article{CLY16,
	  title   = {{On tensor decomposition, sparse interpolation and Pad{\'e} approximation}},
	  author  = {Cuyt, A. A. M. and Lee, W. and Yang, X.},
	  journal = {Ja{\'e}n Journal of Approximation},
	  volume  = {8},
	  number  = {1},
	  pages   = {33--58},
	  year    = {2016}
}

@article{Cuy83,
	  title   = {{Multivariate Pad\'e-approximants}},
	  journal = {Journal of Mathematical Analysis and Applications},
	  volume  = {96},
	  number  = {1},
	  pages   = {283-293},
	  year    = {1983},
	  issn    = {0022-247X},
	  doi     = {https://doi.org/10.1016/0022-247X(83)90041-0},
	  url     = {https://www.sciencedirect.com/science/article/pii/0022247X83900410},
	  author  = {Cuyt, A. A. M.}
}

@article{Cuy86,
	  author  = {Cuyt, A. A. M.},
	  title   = {{Multivariate Pad\'e approximants revisited}},
	  journal = {BIT Numerical Mathematics},
	  year    = {1986},
	  volume  = {26},
	  issue   = {1},
	  pages   = {71-79},
	  doi     = {10.1007/bf01939363}
}

@article{Cuy99,
	  title   = {{How well can the concept of Pad\'e approximant be generalized to the multivariate case?}},
	  journal = {Journal of Computational and Applied Mathematics},
	  volume  = {105},
	  number  = {1},
	  pages   = {25-50},
	  year    = {1999},
	  issn    = {0377-0427},
	  doi     = {https://doi.org/10.1016/S0377-0427(99)00028-X},
	  url     = {https://www.sciencedirect.com/science/article/pii/S037704279900028X},
	  author  = {Cuyt,  A. A. M.}
}

@article{DK07,
	title = {Multiple orthogonal polynomials of mixed type and non-intersecting Brownian motions},
	journal = {Journal of Approximation Theory},
	volume = {146},
	number = {1},
	pages = {91-114},
	year = {2007},
	issn = {0021-9045},
	doi = {https://doi.org/10.1016/j.jat.2006.12.001},
	url = {https://www.sciencedirect.com/science/article/pii/S0021904506002036},
	author = {Daems, E. and Kuijlaars, A. B. J. }
}

@misc{DS16,
      title={{Monotonicity of Zeros of Jacobi-Angelesco polynomials}}, 
      author={{dos Santos}, E. J. C.},
      howpublished = {arXiv preprint arXiv:1603.03762 [math.CA]},
      year={2016},
      eprint={1603.03762},
      archivePrefix={arXiv},
      primaryClass={math.CA},
      url={https://arxiv.org/abs/1603.03762}, 
}

@book{DX14,
	  place      = {Cambridge},
	  edition    = {2},
	  series     = {Encyclopedia of Mathematics and its Applications},
	  title      = {{Orthogonal Polynomials of Several Variables}},
	  doi        = {10.1017/CBO9781107786134},
	  publisher  = {Cambridge University Press},
	  author     = {Dunkl, C. F. and Xu, Y.},
	  year       = {2014},
	  collection = {Encyclopedia of Mathematics and its Applications}
}

@article{Fis04,
     author  = {Fischler, S.},
     title   = {Irrationalit\'{e} de valeurs de $\zeta$},
     journal = {Ast\'{e}risque},
     number  = {294},
     pages   = {27--62},
     year    = {2004},
     note    = {S\'{e}minaire Bourbaki, Exp. nº 910},
     url     = {https://www.numdam.org/item/SB_2002-2003__45__27_0/}
}

@article{FV26,
	  title   = {Multiple orthogonal polynomials of two real variables},
	  journal = {Journal of Mathematical Analysis and Applications},
	  volume  = {553},
	  number  = {1},
	  pages   = {129811},
	  year    = {2026},
	  issn    = {0022-247X},
	  doi     = {https://doi.org/10.1016/j.jmaa.2025.129811},
	  url     = {https://www.sciencedirect.com/science/article/pii/S0022247X2500592X},
	  author  = {Fern\'andez, L. and Villegas, J. A.},
}

@article{GI21,
  title={An urn model for the {J}acobi-{P}i\~neiro polynomials},
  author={Grunbaum, F. A. and {Domínguez de la Iglesia}, M.},
  year={2021},
  url={https://api.semanticscholar.org/CorpusID:233481717},
  journal={Proceedings of the American Mathematical Society}
}

@article{Herm1865, 
	title = {{Sur quelques developpments en s\'erie de fonctions ´
	de plusieurs variables}},
	journal = {Comptes Rendus de l’Academi\'e des Sciences},
	author={Hermite, C.}, 
	year={1865}, 
	pages={370–377} 
}

@book{Ism05,
	  author    = {Ismail, M. E. H.},
	  address   = {Cambridge (England)},
	  booktitle = {Classical and quantum orthogonal polynomials in one variable},
	  isbn      = {0521782015},
	  keywords  = {Polinomios},
	  language  = {eng},
	  publisher = {Cambridge University Press},
	  series    = {Encyclopedia of mathematics and its applications},
	  volume    = {98},
	  title     = {Classical and quantum orthogonal polynomials in one variable},
	  year      = {2005}
}

@incollection{Koo75,
    author    = {Koornwinder, T. H.},
    title     = {Two-variable analogues of the classical orthogonal polynomials},
    editor    = {Askey, Richard A.},
    booktitle = {Theory and Application of Special Functions},
    publisher = {Academic Press},
    address   = {New York, NY},
    pages     = {435--495},
    year      = {1975},
    isbn      = {978-0-12-064850-4},
    doi       = {10.1016/B978-0-12-064850-4.50015-X},
    url       = {https://www.sciencedirect.com/science/article/pii/B978012064850450015X}
}

@article{KV24,
  title={Angelesco and AT systems on the Unit Circle},
  author={Kozhan, R. and Vaktn{\"a}s, M.},
  journal={arXiv preprint arXiv:2410.12094},
  year={2024}
}

@article{KV26,
  title={Zeros of Laurent multiple orthogonal polynomials on the unit circle},
  author={Kozhan, R. and Vaktn{\"a}s, M.},
  journal={arXiv preprint arXiv:2603.21468},
  year={2026}
}

@Article{Lau1893,
author={Lauricella, G.},
title={Sulle funzioni ipergeometriche a piu variabili},
journal={Rendiconti del Circolo Matematico di Palermo},
year={1893},
month={Dec},
day={01},
volume={7},
number={1},
pages={111-158},
issn={1973-4409},
doi={10.1007/BF03012437},
url={https://doi.org/10.1007/BF03012437}
}

@article{MFOSL22,
  title     = {{Electrostatic Partners and Zeros of Orthogonal and Multiple Orthogonal Polynomials}},
  volume    = {58},
  issn      = {1432-0940},
  url       = {http://dx.doi.org/10.1007/s00365-022-09609-x},
  doi       = {10.1007/s00365-022-09609-x},
  number    = {2},
  journal   = {Constructive Approximation},
  publisher = {Springer Science and Business Media LLC},
  author    = {{Mart\'inez-Finkelshtein}, A. and Orive, R. and {S\'anchez-Lara}, J.},
  year      = {2022},
  month     = dec,
  pages     = {271--342}
}

@article{MFVA16,
	  author  = {{Mart\'inez-Finkelshtein}, A. and Van Assche, W.},
	  journal = {Notices of the American Mathematical Society},
	  month   = {10},
	  number  = {09},
	  pages   = {1029--1031},
	  title   = {{WHAT IS...A multiple orthogonal polynomial?}},
	  volume  = {63},
	  year    = {2016},
	  doi     = {10.1090/noti1430},
	  url     = {https://doi.org/10.1090/noti1430}
}

@article{MRW25,
  title={Bivariate multiple orthogonal polynomials of mixed type on the step-line},
  author={Ma\~nas, M. and Rojas, M. and Wu, J.},
  journal={arXiv preprint arXiv:2507.17694},
  year={2025}
}

@book{NS91,
  title     = {Rational approximations and orthogonality},
  author    = {Nikishin, E. M. and Sorokin, V. N.},
  volume    = {92},
  year      = {1991},
  publisher = {American Mathematical Society Providence, RI}
}

@book{OLBC10,
  author = {Olver, F. and Lozier, D. and Boisvert, R. and Clark, C.},
  title = {The NIST Handbook of Mathematical Functions},
  year = {2010},
  month = {2010-05-12 00:05:00},
  publisher = {Cambridge University Press, New York, NY},
  language = {en},
}

@article {Pin87,
	    AUTHOR = {Pi\~neiro, L. R.},
	     TITLE = {On simultaneous approximations for some collection of {M}arkov functions},
	   JOURNAL = {Vestnik Moskov. Univ. Ser. I Mat. Mekh.},
	  FJOURNAL = {Vestnik Moskovskogo Universiteta. Seriya I. Matematika, Mekhanika},
	      YEAR = {1987},
	    NUMBER = {2},
	     PAGES = {67--70, 103},
	      ISSN = {0579-9368},
	   MRCLASS = {41A21},
  MRNUMBER = {884516},
}

@Article{Sor02,
	  author={Sorokin, V. N.},
	  title={{The Hermite--Pad\'e Approximations of Generalized Hypergeometric Series in Two Variables}},
	  journal={Siberian Mathematical Journal},
	  year={2002},
	  month={Jul},
	  day={01},
	  volume={43},
	  number={4},
	  pages={719-730},
	  issn={1573-9260},
	  doi={10.1023/A:1016336605594},
	  url={https://doi.org/10.1023/A:1016336605594}
}

@book{Sue99,
	author = {Suetin, P. K.},
	month = {8},
	publisher = {CRC Press},
	title = {{Orthogonal polynomials in two variables}},
	year = {1999},
}

@book{Szego75,
	  title={Orthogonal Polynomials},
	  author={Szeg{\H{o}}, G.},
	  isbn={9780821810231},
	  lccn={77476087},
	  series={American Math. Soc: Colloquium publ},
	  url={https://books.google.es/books?id=2phzwgEACAAJ},
	  year={1975},
	  publisher={American Mathematical Society}
}

@article{VA06,
	author = {Van Assche, W.},
	year = {2006},
	month = {01},
	pages = {},
	title = {Padé and Hermite-Padé Approximation and Orthogonality},
	volume = {2},
	journal = {Surveys in Approximation Theory}
}

@inbook{VA20,
  title      = {{Orthogonal and Multiple Orthogonal Polynomials, Random Matrices, and Painlev\'e Equations}},
  isbn       = {9783030367442},
  author     = {Van Assche, W.},
  booktitle  = {Orthogonal Polynomials},
  publisher  = {Springer International Publishing},
  year       = {2020},
  pages      = {629--683},
  chapter    = {13 (Part II)}
}

@article{Zud04,
     author = {Zudilin, W.},
     title = {Arithmetic of linear forms involving odd zeta values},
     journal = {Journal de th\'eorie des nombres de Bordeaux},
     pages = {251--291},
     publisher = {Universit\'e Bordeaux 1},
     volume = {16},
     number = {1},
     year = {2004},
     doi = {10.5802/jtnb.447},
     zbl = {02184645},
     mrnumber = {2145585},
     language = {en},
     url = {https://www.numdam.org/articles/10.5802/jtnb.447/}
}

\end{document}